\documentclass{article}
\usepackage{url}
\usepackage{graphicx}
\graphicspath{{}}
\usepackage[tbtags]{amsmath}
\usepackage{amsfonts,amssymb}
\usepackage{mathabx}
\usepackage{mathtools}
\mathtoolsset{showonlyrefs=fasle,showmanualtags}
\usepackage{amsthm} 
\theoremstyle{plain}
\newtheorem{theorem}{Theorem}
\newtheorem{lem}[theorem]{Lemma}
\newtheorem{prop}[theorem]{Proposition}

\theoremstyle{definition}
\newtheorem{definition}{Definition}

\theoremstyle{remark}
\newtheorem*{remark}{Remark}


\usepackage{amssymb}
\usepackage{algorithm}
\usepackage{algorithmic}
\usepackage{float}

\usepackage[english]{babel}
\usepackage{enumitem}
\usepackage{color,booktabs,colortbl}
\usepackage{latexsym,bm}
\usepackage{multicol}
\parindent=20pt
\setlength{\parskip}{5pt}
\listfiles
\usepackage{comment}
\usepackage{geometry}
\geometry{left=2.5cm,right=2.5cm,top=2.5cm,bottom=2.5cm}
\usepackage{fancyhdr}
\pagestyle{fancy}
\lhead{}
\chead{}
\rhead{\bfseries }
\lfoot{}
\cfoot{\thepage}
\rfoot{}

\usepackage{caption}
\captionsetup{%
  labelfont = bf,
  format = hang,
  font = {small, sf}}
\usepackage{subcaption}
\captionsetup[sub]{font={small,sf}, labelfont = bf,
  format = hang}
\newcommand{\norm}[1]{\left\lVert#1\right\rVert}

\DeclareMathOperator*{\argmin}{arg\,min}

\DeclareMathOperator*{\re}{Re}
\DeclareMathOperator*{\im}{Im}
\DeclareMathOperator*{\diag}{diag}

\DeclareMathOperator*{\dist}{dist}

\usepackage{listings}
\lstset{ %
language=[LaTeX]TeX,
basicstyle=\normalsize\ttfamily,
keywordstyle=,
numbers=left,
numberstyle=\tiny\ttfamily,
stepnumber=1,
showspaces=false,
showstringspaces=false,
showtabs=false,
breaklines=true,
frame=tb,
framerule=0.5pt,
tabsize=4,
framexleftmargin=0.5em,
framexrightmargin=0.5em,
xleftmargin=0.5em,
xrightmargin=0.5em
}

\usepackage[backend=bibtex,citestyle=numeric-comp,sortcites,bibstyle=mybibstyle,firstinits=true,isbn=false,doi=false]{biblatex}
\addbibresource{/home/leechi/Documents/RandPRNew/paper/paper/phaseretrieval.bib}
\ExecuteBibliographyOptions{%
  citetracker=true,
  alldates=long,
  dateabbrev=false,
  maxcitenames=999
  }

\DeclareFieldFormat{labelnumberwidth}{#1\addperiod}
\DeclareFieldFormat[article]{volume}{\bibstring{volume}\addnbspace\bfseries #1}
\DeclareFieldFormat[article]{number}{\bibstring{number}\addnbspace #1}
\renewbibmacro*{volume+number+eid}{%
  \printfield{volume}%
  \setunit{\addcomma\space}
  \printfield{number}%
  \setunit{\addcomma\space}%
  \printfield{eid}}
\AtEveryCitekey{%
  \ifentrytype{article}
    {\clearfield{title}}
    {}%
  \clearfield{doi}%
  \clearfield{url}%
  \clearlist{publisher}%
  \clearlist{location}%
  \clearfield{note}%
}

\DeclareFieldFormat{urldate}{\bibstring{urlseen}\space#1}
\renewbibmacro*{date}{
  \iffieldundef{year}
    {\ifentrytype{online}
       {\printtext[urldate]{\printurldate}}
       {\printtext[date]{\printdate}}}
    {\printfield[date]{year}}}

\DeclareFieldFormat*{title}{#1}
\DeclareFieldFormat[book]{date}{\textbf{#1}} 
\renewbibmacro*{title}{
  \ifboolexpr{ test {\iffieldundef{title}}
               and test {\iffieldundef{subtitle}} }
    {}
    {\ifboolexpr{ test {\ifhyperref}
                  and not test {\iffieldundef{doi}} }
       {\href{http://dx.doi.org/\thefield{doi}}
          {\printtext[title]{%
             \printfield[titlecase]{title}%
             \setunit{\subtitlepunct}%
             \printfield[titlecase]{subtitle}}}}
       {\ifboolexpr{ test {\ifhyperref}
                     and not test {\iffieldundef{url}} }
         {\href{\thefield{url}}
            {\printtext[title]{%
               \printfield[titlecase]{title}%
               \setunit{\subtitlepunct}%
               \printfield[titlecase]{subtitle}}}}
         {\printtext[title]{%
            \printfield[titlecase]{title}%
            \setunit{\subtitlepunct}%
            \printfield[titlecase]{subtitle}}}}%
     \newunit}%
  \printfield{titleaddon}%
  \clearfield{doi}%
  \clearfield{url}%
  \clearlist{language}
  \clearfield{note}
  \ifentrytype{article}
    {\adddot}
    {}}

\begin{document} \title{On Relaxed Averaged Alternating Reflections (RAAR) 
  Algorithm for Phase Retrieval from Structured Illuminations} \author{Ji
Li\thanks{liji597760593\@126.com,
LMAM, School of Mathematical Sciences, Peking University, Beijing
100871, China}\quad Tie
Zhou\thanks{tzhou\@math.pku.edu.cn,
LMAM, School of Mathematical Sciences, Peking University, Beijing
100871, China}}
\date{\today}
\maketitle
\begin{abstract} 
In this paper, as opposed to the random phase masks, the structured
illuminations with a pixel-dependent deterministic phase shift are
considered to derandomize the model setup. The RAAR algorithm is modified to adapt to two or more
diffraction patterns, and the modified RAAR algorithm operates in
Fourier domain rather than space domain. The local convergence
of the RAAR algorithm is proved by some eigenvalue analysis. Numerical simulations is presented to demonstrate the effectiveness and stability of the algorithm
compared to the HIO (Hybrid Input-Output) method. The numerical performances show the global convergence of the RAAR in our tests.
\end{abstract}
\section{Introduction}
\label{sec:some-lemas}

The phase retrieval problem arises in many engineering and science
applications, such as X-ray crystallography~\cite{Millane1990},
electron microscopy~\cite{Misell1973}, X-ray diffraction
imaging~\cite{Shechtman2015}, optics~\cite{Kuznetsova1988} and
astronomy~\cite{Fienup1982}, just name a few. In these applications,
one often has recorded the Fourier transform intensity of a complex
signal, while the phase information is infeasible. The recovery of the
signal from the intensity of its Fourier transform is called phase
retrieval. We refer the reader to the recent survey
papers~\cite{Shechtman2015,Jaganathan2015} for the recent progress on
this problem.

There are two fundamental issues that accompany the phase retrieval
problem. The first one is the non-uniqueness of the solution. Clearly, the solution to
phase retrieval have the following three ``trivial associates'': the
solutions up to a
unit magnitude complex coefficient, a shift in space-domain and a
conjugate reflection through the origin. Fortunately, those
solutions do not change the structure of the solution, and they are
accepted. References for
theoretical results in terms of uniqueness can be found in~\cite{Akutowicz1956,Akutowicz1957,Barakat1984}
for continuous setting and
in~\cite{Sanz1985,Hayes1982,Bates,Sanz1983}
for the discrete model. Those references point out that the solution is
almost \emph{relatively unique} in multidimensional cases, except for
the above three 
``trivial associates''.  In literature~\cite{Luke2002}, it is pointed
out that, these uniqueness results are of fundamental importance, but
these do not apply to numerical algorithms, in particular for the noisy
data. The second issue is how to design efficient numerical
algorithms. The most widely used methods are perhaps the error
reduction (ER) and its variants, such as HIO~\cite{Fienup1982},
HPR~\cite{Luke2003}, RAAR~\cite{Luke2004} and the difference
map. Since these methods involve the sequentially projection onto
the constant sets, they are called \emph{iterative projection methods}. Although, in principle, if the
Fourier magnitude measurements are sufficiently
oversampled~\cite{Miao2000}, the lost phase information can be
recovered, these iterative
projection methods easily stagnate at 
a local minimizer. Additional constraints such as real-valuedness and
nonnegativity does not always increase the probability of
finding a trivial associate solution. A unified evaluation of the iterative projection
algorithms for the phase retrieval can be found in
literature~\cite{Marchesini2007}. The above iteration schemes are the counterparts of the
corresponding 
iteration in the framework of convex set feasible
problem~\cite{Bauschke2002}. Take an example, the HIO method with
relaxation parameter $\beta=1$ is the Douglas-Rachford
algorithm. Since the HIO method with $\beta=1$ performs
best~\cite{Fienup1982}, we take the HIO and Douglas-Rachford as the
same without ambiguity. The phase retrieval involve the intensity
constraint set in Fourier space, which is nonconvex, there is no
theory to guarantee the convergence, unlike the convex setting.

The stagnation may be due to the nonuniqueness in the general sense,
i.e., except the trivial associates. They are two approaches to
restore the \emph{uniqueness} only up to a complex constant with unit
magnitude. A natural way is incorporating the structured illumination,
i.e., to collect the diffraction patterns of the
modulated object $\bm{w}(\bm{n})\bm{x}(\bm{n})$, where the waveforms
or patterns $\bm{w}(\bm{n})$ are known. The phase retrieval from structured
illuminations was formulated as a matrix completion problem, whose
convex relaxation is a convex trace-norm minimization
problem~\cite{Candes2013,Candes2014}. However, due to the lifting from 
vector to matrix, the approach is prohibitive for two-dimensional
problem. To overcome the memories consumption, a common least-squares optimization was proposed and solved by
gradient descent method from a special spectral initialization with
local convergence guarantee~\cite{Candes2015}. To ensure the
effectiveness of the lifting method and gradient descent method, the
patterns $\bm{w}(\bm{n})$ are assumed Gaussian or with admissible
distribution and a large number is needed. They can be considered as
the optimization approaches, the iterative projection
schemes can also  applied to the phase retrieval problem with
structured illumination. Structured
illumination method with random phase mask (in uniform distribution) is
proposed by Fannjiang~\cite{Fannjiang2012a}  to restore
the \emph{uniqueness}. In this setting, even the ER algorithm behaves
well for nonnegative image. And recently the local
geometrical convergence to a solution for HIO is
proved~\cite{Chen2015,Chen2015a}. An important difference between the
optimization approaches in~\cite{Candes2013,Candes2014,Candes2015} and the
standard iterative projection methods is that
their coded diffraction patterns are not oversampled. We emphasize that reducing the number of coded diffraction
patterns is crucial for the diffract-before-destruct approach and
oversampling is a small price to pay with current sensor
technology~\cite{Chen2015}.

However, the random phase mask is difficult to implement in
practice. To reduce the randomness of the random phase mask, we replace
the random phase mask by pixel-dependent deterministic phase
shift.  The physical realizable setup can be implemented in two
ways. If the diffraction is in the regime of Fraunhofer, the
phase shift can be result from optical instruments, such as optical
grating, ptychography and oblique illuminations (see~\cite{Candes2013}
and references therein). In the regime of Fresnel, the phase shift is
automatically introduced in the original signal/image. In this paper, we consider the
more stable RAAR algorithm instead of the HIO method used in~\cite{Chen2015}. A common and vexing problem for the HIO
method is that iterates will oscillation around the solution in the
noiseless case and even wander away from the neighborhood of the
solution in the presence of noise (see our numerical simulations in
Section~\ref{sec:numer-simul}). We modify the RAAR algorithm (still
called RAAR) to adapt to two or more diffraction
patterns, which operates in Fourier domain rather than space domain.

The rest of the paper is
organized as follows. In Section~\ref{sec:meaur-setup-some}, we clarify the oversampling
measurement scheme and provide some preliminaries. In Section~\ref{sec:raar-algorithm}, we
describe the RAAR algorithm for two or more diffraction patterns. The local convergence of RAAR is proved under some
assumptions in Section~\ref{sec:local-convergence}. In Section~\ref{sec:numer-simul}, we present numerical
examples and demonstrate numerical global convergence of the RAAR
algorithm. Section~\ref{sec:conclusion} conclude the results of the paper.

\section{Measurements setup and some preliminaries}
\label{sec:meaur-setup-some}

\subsection{Oversampled diffraction patterns}
\label{sec:overs-diffr-patt}

We consider the discrete version of the phase retrieval problem. Given
a vector with nonnegative integer elements
$\bm{n}=(n_1,\ldots,n_d)\in\mathbb{N}^d$ and a vector with complex components
$\bm{z}=(z_1,\ldots,z_d)\in\mathbb{C}^d$, we define the multi-index
notation $\bm{z}^{\bm{n}}=z_1^{n_1}z_2^{n_2}\cdots z_d^{n_d}$. Let $\mathcal{C}(\bm{n})$ denote the set of complex-valued sequences
on $\mathbb{N}^d$ vanishing outside
\begin{equation*}
  \mathcal{N}=\{\,\bm{0}\leq \bm{n}\leq \bm{N}\,\},\quad \bm{N}=(N_1,\ldots,N_d),
\end{equation*}
where $\bm{n}\geq\bm{0}$ if $n_j\geq 0,\forall j$, and the
cardinality of $\mathcal{N}$ is $\lvert\mathcal{N}\rvert=\prod_j
N_j$. Then
the $d$-dimensional $z$-transform of a sequence $x(\bm{n})\in\mathcal{C}(\bm{n})$ may be
written compactly as
\begin{equation*}
  X(\bm{z})=\sum_{\bm{n}}x(\bm{n})\bm{z}^{-\bm{n}}.
\end{equation*}
All sequences $x(\bm{n})$ are assumed to have $z$-transforms with a
region of convergence that includes the unit ball $\lvert
z_k\rvert=1,k=1,\ldots,d$, so that the Fourier transform may be obtained as
\begin{equation*}
  X(\bm{\omega})=\left. X(\bm{z})\right\rvert_{\bm{z}=e^{i2\pi\bm{\omega}}}=\sum_{\bm{n}}x(\bm{n})e^{-i2\pi\bm{\omega}\cdot
  \bm{n}},\quad \bm{\omega}\in\mathbb{R}^d. 
\end{equation*}
Written in polar form, $X(\bm{\omega})$ is represented in terms of its
magnitude and phase as
\begin{equation*}
  X(\bm{\omega})=\lvert X(\bm{\omega})\rvert \exp(i\phi_x(\bm{\omega})).
\end{equation*}

We have that
\begin{equation*}
  \lvert
  X(\bm{\omega})\rvert^2=\sum_{\bm{n}=-\bm{N}}^{\bm{N}}\sum_{\bm{m}+\bm{n}\in\mathcal{N}}x(\bm{m}+\bm{n})x^*(\bm{m})e^{-i2\pi\bm{\omega}\cdot
  \bm{n}},
\end{equation*}
where $*$ denotes the conjugate.
We see that the Fourier intensity measurement is equivalent to the
Fourier transform measurement of the autocorrelation function of
$x(\bm{n})$:
\begin{equation*}
  r(\bm{n})=x(\bm{n})\star x^*(-\bm{n})=\sum_{\bm{m}+\bm{n}\in\mathcal{N}}x(\bm{m}+\bm{n})x^*(\bm{m}),
\end{equation*}
where $\star$ denotes the convolution.
From the relation between Fourier intensity measurement and the
autocorrelation function, since the autocorrelation sequences
$r(\bm{n})$ is defined on the enlarged grid
$\mathcal{M}=\{\,-\bm{N}\leq\bm{n}\leq\bm{N}\,\}$, whose cardinality is roughly $2^d$ times of the cardinality of
$\mathcal{N}$. By dimension counting, we should sample the
Fourier transform magnitude with more $2^d$ points than the space
discrete grid points, it is equivalent to apply DFT on a new sequence
with padding the original sequence by zeros. It is the standard
oversampling method. Then we may recover the original sequence from
the autocorrelation function or the Fourier transform intensity.

\subsection{Some preliminaries}
\label{sec:some-premilinaries}

We define a conjugate symmetric polynomial as follows~\cite{Hayes1982,Pitts2003}.
\begin{definition}[Conjugate Symmetry]
A polynomial $X(\bm{z})$ in $\bm{z}^{-1}$ is said to be
conjugate symmetric if
\begin{equation*}
  X(\bm{z})=\pm \bm{z}^{-\bm{k}}X^*(1/\bm{z}^*),
\end{equation*}
 for some vector $\bm{k}$ of positive integers. 
\end{definition}

We define the conjugate space-reversed polynomial
\begin{equation*}
  \widetilde{X}(\bm{z})=\bm{z}^{-\bm{N}}X^*(1/\bm{z}^*).
\end{equation*}
Clearly, the functions $X(\bm{z})$ and $\widetilde{X}(\bm{z})$ are
both polynomial in $\bm{z}^{-1}$. To understand the meaning of the operation $X(\bm{z})\rightarrow
 \bm{z}^{-\bm{N}}X^*(1/\bm{z}^*)$, we consider the $z$-transform of a
 sequences $\{\,a_0,a_1,\ldots,a_N\,\}$
 \begin{equation*}
   X(z)=a_0+a_1z^{-1}+\cdots+a_Nz^{-N}.
 \end{equation*}
Then,
\begin{equation*}
  z^{-N}X^*(1/z^*)=a_N^*+a_{N-1}^*z^{-1}+\cdots+a_0^*z^{-N}
\end{equation*}
is the $z$-transform of $\{\,a_N^*,a_{N-1}^*,\ldots,a_0^*\,\}$, which
is the conjugate time (space)-reversed sequence. 

Using the convolution theorem, the $z$-transform of the
autocorrelation function $r(\bm{n})$ is given by
\begin{equation}
\label{eq:1}
  R(\bm{z})=X(\bm{z})X^*(1/\bm{z}^*).
\end{equation}
From the fundamental theorem of algebra, the polynomial $X(\bm{z})$ in
$\bm{z}^{-1}$ always can be written uniquely (up to a factors of zero
degree) as the Hadamard product
\begin{equation}
\label{eq:2}
  X(\bm{z})=\alpha\bm{z}^{-\bm{n}_0}\prod_{k=1}^p X_k(\bm{z}),
\end{equation}
where $\bm{n}_0$ is a vector of nonnegative integers, $\alpha$ is a
complex coefficient, and $X_k(\bm{z})$ are nontrivial irreducible
polynomials in $\bm{z}^{-1}$. Combining \eqref{eq:1} and \eqref{eq:2},
we have
\begin{equation}
\label{eq:3}
  R(\bm{z}) = \lvert \alpha\rvert^2\prod_{k=1}^p X_k(\bm{z})X_k^*(1/\bm{z}^*).
\end{equation}
Since the trivial factors $\bm{z}^{-\bm{n}_0}$ represent the shifts in
space domain and hence contain information about the location of the
signal, So the cancellation of the linear phase factors \eqref{eq:3}
destroys information about location in the space domain.

We consider the more convenient function $Q(\bm{z})$:
\begin{equation*}
  Q(\bm{z})=\bm{z}^{-\bm{N}}R(\bm{z})=\lvert\alpha\rvert^2\prod_{k=1}^pX_k(\bm{z})\widetilde{X}_k(\bm{z}).
\end{equation*}
Using the convolution theorem and conjugate symmetry, we see that the
irreducible factors of $Q(\bm{z})$ define a set of sequences in the
space domain whose convolution yields (to within a factor of zero
degree) the autocorrelation sequence $r(\bm{n})$. These sequences
corresponding to the irreducible factor are the basic building blocks
for $r(\bm{n})$. Thus, asking if a sequences possesses a reducible
$z$-transform is equivalent to asking if there are smaller sequences,
which may be convolved to yield the original. The conditions under
which it is possible to recover the original sequence from the
magnitude of its Fourier transform are simply those which allow
unambiguous separation of the irreducible factors of $Q(\bm{z})$ into
those belong to $x(\bm{n})$ and those corresponding to their tilde
counterparts.

Hayes~\cite{Hayes1982} provided the theorems in this section, but he
only consider the signal or sequence $x(\bm{n})$ is real-valued. For
the comprehensiveness and easy reference of the paper, we now offer a
simple extension of the theorems to complex case.
\begin{theorem}[$z$-transform factorization]
\label{thm:1}
Let the $z$-transform $X(\bm{z})$ of a sequence
$x(\bm{n})\in\mathcal{C}(\bm{n})$ be given by
\begin{equation*}
  X(\bm{z})=\alpha\bm{z}^{-\bm{n}_0}\prod_{k=1}^pX_k(\bm{z}),\quad
  \bm{n}_0\in \mathbb{N}^d,\quad \alpha\in\mathbb{C},
\end{equation*}
where $X_k(\bm{z}),k=1,\ldots,p$ are nontrivial irreducible
polynomials. Let $Y(\bm{z})$ be the $z$-transform of another sequence
$y(\bm{n})\in\mathcal{C}(\bm{n})$. Suppose $\lvert
X(\bm{\omega})\rvert=\lvert Y(\bm{\omega})\rvert$ for all
$\bm{\omega}$, then $Y(\bm{z})$ must have the form
\begin{equation*}
  Y(\bm{z})=\lvert \alpha\rvert e^{i\theta}\bm{z}^{-\bm{m}}\prod_{k\in
  I_1}X_k(\bm{z})\prod_{k\in I_2}\widetilde{X}_k(\bm{z}),\quad
\bm{m}\in\mathbb{N}^d,\quad \theta\in\mathbb{R},
\end{equation*}
where $I_1$ and $I_2$ are complementary subsets of the integers in the
set $\{\,1,2,\ldots,p\,\}$.
\end{theorem}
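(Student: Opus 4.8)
The plan is to convert the Fourier-magnitude hypothesis into a polynomial identity for the autocorrelation and then exploit unique factorization in the Laurent polynomial ring. First I would note that $\lvert X(\bm{\omega})\rvert=\lvert Y(\bm{\omega})\rvert$ for all $\bm{\omega}$ is equivalent to $\lvert X(\bm{\omega})\rvert^2=\lvert Y(\bm{\omega})\rvert^2$, i.e.\ the two sequences share the same autocorrelation. On the unit torus $\bm{z}=e^{i2\pi\bm{\omega}}$ one has $\overline{X(\bm{z})}=X^*(1/\bm{z}^*)$, so the hypothesis reads $X(\bm{z})X^*(1/\bm{z}^*)=Y(\bm{z})Y^*(1/\bm{z}^*)$ there. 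Matching Fourier coefficients (equivalently, the autocorrelation sequences) shows these two Laurent polynomials coincide, i.e.\ the identity
\begin{equation*}
  R(\bm{z})=X(\bm{z})X^*(1/\bm{z}^*)=Y(\bm{z})Y^*(1/\bm{z}^*)
\end{equation*}
holds for all $\bm{z}$.

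Next I would factor both representations of $R$. Substituting the hypothesized factorization of $X$ and using that the operation $P\mapsto P^*(1/\bm{z}^*)$ sends $\bm{z}^{-\bm{n}_0}$ to $\bm{z}^{\bm{n}_0}$, the linear-phase factors cancel and
\begin{equation*}
  R(\bm{z})=\lvert\alpha\rvert^2\prod_{k=1}^p X_k(\bm{z})\,X_k^*(1/\bm{z}^*).
\end{equation*}
The crux is that $P\mapsto P^*(1/\bm{z}^*)$ is a multiplicative involution of the Laurent polynomial ring, so it carries irreducibles to irreducibles; in particular each $X_k^*(1/\bm{z}^*)$ is irreducible and differs from $\widetilde{X}_k(\bm{z})$ only by a monomial. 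Since $\mathbb{C}[\bm{z}^{-1}]$ is a unique factorization domain whose units are precisely the monomials, the complete multiset of irreducible factors of $R$, read up to monomial units, is $\{\,X_k,\ \widetilde{X}_k:1\leq k\leq p\,\}$.

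I would then split these factors between $Y$ and $Y^*(1/\bm{z}^*)$. Each irreducible factor of $Y(\bm{z})$ divides $R(\bm{z})$ and hence coincides, up to a monomial, with some $X_k$ or $\widetilde{X}_k$. Applying the involution to $R=Y\cdot Y^*(1/\bm{z}^*)$ shows that $Y$ and $Y^*(1/\bm{z}^*)$ receive involution-conjugate factors, so for each $k$ the factor $Y$ must retain \emph{exactly} one member of the pair $\{X_k,\widetilde{X}_k\}$: if $Y$ keeps $X_k$ then $Y^*(1/\bm{z}^*)$ supplies $X_k^*(1/\bm{z}^*)$, while if $Y$ keeps $\widetilde{X}_k$ the roles are reversed, each case reproducing that pair of $R$ exactly once. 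Indexing by $I_1$ the $k$ for which $Y$ keeps $X_k$ and by $I_2=\{1,\ldots,p\}\setminus I_1$ its complement, we obtain
\begin{equation*}
  Y(\bm{z})=\beta\,\bm{z}^{-\bm{m}}\prod_{k\in I_1}X_k(\bm{z})\prod_{k\in I_2}\widetilde{X}_k(\bm{z}),
\end{equation*}
where the monomial $\bm{z}^{-\bm{m}}$ collects the unit ambiguity inherent to multivariate factorization, with $\bm{m}\in\mathbb{N}^d$ because $y(\bm{n})\in\mathcal{C}(\bm{n})$. Finally, the paired products $X_k\,X_k^*(1/\bm{z}^*)$ recombine identically on both sides of $R$, so the scalar prefactors must match in modulus, giving $\lvert\beta\rvert^2=\lvert\alpha\rvert^2$ and hence $\beta=\lvert\alpha\rvert e^{i\theta}$.

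The main obstacle is the step from the familiar one-dimensional picture of flipping individual zeros to the genuinely multivariate setting: there one cannot argue via roots and must instead invoke the UFD property of $\mathbb{C}[\bm{z}^{-1}]$, verify that the involution $P\mapsto P^*(1/\bm{z}^*)$ preserves irreducibility, and carefully account for the monomial units responsible for $\bm{z}^{-\bm{m}}$. A harmless edge case is a conjugate-symmetric factor with $X_k=\widetilde{X}_k$ up to a unit, for which assignment to $I_1$ or $I_2$ is immaterial and does not affect the stated form.
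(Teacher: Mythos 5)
Your proposal is correct and follows essentially the same route as the paper: both pass from the magnitude hypothesis to the autocorrelation identity $X(\bm{z})X^*(1/\bm{z}^*)=Y(\bm{z})Y^*(1/\bm{z}^*)$ and then invoke unique factorization to distribute the irreducible factors $X_k$, $\widetilde{X}_k$ between $Y$ and its conjugate space-reversal. The only cosmetic difference is that you track the monomial units inside the Laurent ring, whereas the paper clears them by multiplying through by $\bm{z}^{-\bm{N}}$ and applying unique factorization in $\mathbb{C}[\bm{z}^{-1}]$; your write-up is, if anything, more explicit about the involution preserving irreducibility and the pairing of factors.
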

\begin{proof}
  From the condition $\lvert
X(\bm{\omega})\rvert=\lvert Y(\bm{\omega})\rvert$, the $z$-transform
of the autocorrelation functions of the sequences $x(\bm{n})$ and
$y(\bm{n})$ are equal, i.e.,
\begin{equation*}
  X(\bm{z})X^*(1/\bm{z}^*) = Y(\bm{z})Y^*(1/\bm{z}^*).
\end{equation*}
Let $y(\bm{n})$ have a $z$-transform given by
\begin{equation*}
  Y(\bm{z})=\beta\bm{z}^{-\bm{m}_0}\prod_{k=1}^qY_k(\bm{z}).
\end{equation*}
Thus,
\begin{equation}
\label{eq:5}
  \lvert\alpha\rvert^2\prod_{k=1}^pX_k(\bm{z})X_k^*(1/\bm{z}^*)=\lvert\beta\rvert^2\prod_{k=1}^qY_k(\bm{z})Y_k^*(1/\bm{z}^*).
\end{equation}
Multiplying both sides of \eqref{eq:5} by $\bm{z}^{-\bm{N}}$ gives
\begin{equation}
  \label{eq:6}
  \lvert\alpha\rvert^2\bm{z}^{-\bm{m}_1}\prod_{k=1}^pX_k(\bm{z})\widetilde{X}_k(\bm{z})=\lvert\beta\rvert^2\bm{z}^{-\bm{m}_2}\prod_{k=1}^qY_k(\bm{z})\widetilde{Y}_k(\bm{z}).
\end{equation}
All factors on both sides of \eqref{eq:6} are polynomials in
$\bm{z}^{-1}$ and $\bm{m}_1\geq\bm{0},\bm{m}_2\geq \bm{0}$. The unique
factorization theorem implies $\bm{m}_1=\bm{m}_2$, $p=q$. Thus
$Y(\bm{z})$ is of the form
\begin{equation*}
  Y(\bm{z})=\eta\bm{z}^{-\bm{m}}\prod_{k\in I_1}X_k(\bm{z})\prod_{k\in
  I_2}\widetilde{X}_k(\bm{z}).
\end{equation*}
From the magnitude of Fourier transform are the same, so
$\eta=\lvert\alpha\rvert e^{i\theta}$.
\end{proof}

From the Theorem~\ref{thm:1}, we lack information about the linear phase terms
in \eqref{eq:6} and the unit magnitude complex $\theta$. These,
however, do not affect the shape of the recovered signal. If $X(\bm{z})$ has a single irreducible factor, we may
obtain $x(\bm{n})$ up to a complex unit magnitude constant, a shift
and a conjugate reflection through the origin. If we replace a
conjugate symmetric factor with its conjugate space-reversed
counterpart, we may just change the $x(\bm{n})$ by a sign~\cite{Pitts2003}.

\begin{definition}[Equivalence]
  We say that $y(\bm{n})$ is equivalent to $x(\bm{n})$ if
  \begin{equation*}
    y(\bm{n})=
    \begin{cases}
      e^{i\theta}x(\bm{k}+\bm{n}),\\
      e^{i\theta}x^*(\bm{k}-\bm{n}).
    \end{cases}
  \end{equation*}
for some real scalar $\theta$ and some vector $\bm{k}$ with integer components.
We denote as $y\sim x$.
\end{definition}
\begin{theorem}[Uniqueness of phase retrieval]
\label{thm:2}
  Let $x(\bm{n})\in\mathcal{C}(\bm{n})$ have a $z$-transform with at
  most one irreducible nonconjugate symmetric factors, i.e.,
  \begin{equation*}
    X(\bm{z})=P(\bm{z})\prod_{k=1}^pX_k(\bm{z}),
  \end{equation*}
where $P(\bm{z})$ is irreducible and $X_k(\bm{z})$ are irreducible and
conjugate symmetric. If $y(\bm{n})\in\mathcal{C}(\bm{n})$ with $\lvert
X(\bm{\omega})\rvert=\lvert Y(\bm{\omega})\rvert$, then $y\sim
x$.
\end{theorem}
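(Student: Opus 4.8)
The plan is to feed the hypothesis of Theorem~\ref{thm:2} directly into the factorization already established in Theorem~\ref{thm:1}. I would regard the full list of irreducible factors of $X(\bm{z})$ as $\{P,X_1,\ldots,X_p\}$ and apply Theorem~\ref{thm:1}: since $\lvert X(\bm{\omega})\rvert=\lvert Y(\bm{\omega})\rvert$, the transform $Y(\bm{z})$ is obtained from $X(\bm{z})$ by selecting, for each irreducible factor, either the factor itself or its conjugate space-reversed counterpart, up to a unit-modulus constant $e^{i\theta}$ and a monomial $\bm{z}^{-\bm{m}}$. Thus $Y(\bm{z})=\lvert\alpha\rvert e^{i\theta}\bm{z}^{-\bm{m}}\prod_{k\in I_1}(\cdot)\prod_{k\in I_2}\widetilde{(\cdot)}$ for complementary index sets $I_1,I_2$ ranging over all of $\{P,X_1,\ldots,X_p\}$.

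The key reduction is to show that the conjugate symmetric factors contribute nothing beyond trivial associates. For each $X_k$, the conjugate symmetry relation $X_k(\bm{z})=\pm\bm{z}^{-\bm{k}}X_k^*(1/\bm{z}^*)$ combined with the definition $\widetilde{X}_k(\bm{z})=\bm{z}^{-\bm{N}_k}X_k^*(1/\bm{z}^*)$ shows that $\widetilde{X}_k(\bm{z})$ equals $X_k(\bm{z})$ up to a sign and a monomial. Hence, whichever of $X_k$ or $\widetilde{X}_k$ is selected in $Y$, the product over $k$ collapses to $\prod_k X_k(\bm{z})$ times a single accumulated sign and a single accumulated monomial, both of which I would absorb into $e^{i\theta}$ and $\bm{z}^{-\bm{m}}$. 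Consequently, the only genuine binary choice is whether the lone nonconjugate symmetric factor $P$ enters as $P$ or as $\widetilde{P}$.

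I would then split into two cases. If $P$ is chosen unreversed, then after collapsing the conjugate symmetric factors we get $Y(\bm{z})=\lvert\alpha\rvert e^{i\theta}\bm{z}^{-\bm{m}}P(\bm{z})\prod_k X_k(\bm{z})=c\,\bm{z}^{-\bm{m}}X(\bm{z})$ with $\lvert c\rvert$ fixed; translating back to space domain, the monomial is a shift and $c$ is a global phase, giving $y(\bm{n})=e^{i\theta}x(\bm{k}+\bm{n})$. If $P$ is chosen reversed, I would invoke $\widetilde{X}(\bm{z})=\bm{z}^{-\bm{N}}X^*(1/\bm{z}^*)$ together with the same conjugate symmetry identities applied now to each $X_k^*(1/\bm{z}^*)$ to rewrite $\prod_k X_k^*(1/\bm{z}^*)$ as $\prod_k X_k(\bm{z})$ up to a sign and a monomial; this identifies $Y(\bm{z})$ with $c'\,\bm{z}^{-\bm{m}'}\widetilde{X}(\bm{z})$, i.e.\ $y(\bm{n})=e^{i\theta}x^*(\bm{k}-\bm{n})$. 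In either case $y\sim x$.

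The part I expect to demand the most care is the bookkeeping of the accumulated monomials and signs: I must verify that each leftover $\bm{z}^{-\cdot}$ is a genuine nonnegative-power monomial, hence a legitimate shift keeping the sequence in $\mathcal{C}(\bm{n})$, and that every stray $\pm$ sign can be folded into the free phase $e^{i\theta}$. It is also worth highlighting \emph{where} the hypothesis ``at most one irreducible nonconjugate symmetric factor'' is used: with two or more such factors one could reverse one while leaving another unreversed, producing a $Y$ that is neither a shift of $x$ nor its conjugate reflection. The single-factor assumption is precisely what forces the above two cases to exhaust all possibilities.
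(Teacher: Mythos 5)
Your proposal is correct and follows essentially the same route the paper takes: the paper gives no separate proof of Theorem~\ref{thm:2}, but the paragraph immediately preceding it sketches exactly your argument, namely apply the factorization of Theorem~\ref{thm:1} and observe that swapping a conjugate symmetric factor $X_k$ for $\widetilde{X}_k$ changes the signal only by a sign (and a monomial shift), so the only substantive choice is whether the lone nonsymmetric factor $P$ is reversed, yielding $y(\bm{n})=e^{i\theta}x(\bm{k}+\bm{n})$ or $y(\bm{n})=e^{i\theta}x^*(\bm{k}-\bm{n})$. Your closing remarks on monomial/sign bookkeeping and on where the ``at most one'' hypothesis is used are exactly the right points of care.
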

Though, we can obtain at most $2^{(p-1)}$ different signals with
the same Fourier transform magnitude in one-dimensional phase
retrieval, the solutions for phase retrieval in two or more
dimensional are almost unique in the equivalent sense. This is due to
the fact that ``almost all'' polynomials in two or more variables are
irreducible~\cite{Hayes1982a}. It has also been shown for the case of polynomials with
real coefficients that the geometric character of the set of reducible
polynomials provides a stable framework for the retrieval of the phase magnitude~\cite{Sanz1983}.
\begin{theorem}[Uniqueness of magnitude retrieval]
\label{thm:3}
Let $x(\bm{n}),y(\bm{n})\in\mathcal{C}(\bm{n})$. If $X(\bm{z})$ and
$Y(\bm{z})$ have no nontrivial symmetric factors, i.e., trivial linear
phase factors are excluded. If $\tan(\phi_x(\bm{\omega}))=\tan(\phi_y(\bm{\omega}))$ for all $\bm{\omega}$, then $x(\bm{n})=\beta
y(\bm{n})$ for some real number $\beta$.
\end{theorem}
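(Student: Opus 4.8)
The plan is to convert the pointwise condition on phases into a single polynomial identity and then exploit unique factorization exactly as in the proof of Theorem~\ref{thm:1}. Since $\tan(\phi_x(\bm{\omega}))=\tan(\phi_y(\bm{\omega}))$ means the two phases agree modulo $\pi$, the quotient $X(\bm{\omega})/Y(\bm{\omega})$ is real wherever it is defined; equivalently $\im\!\left(X(\bm{\omega})\overline{Y(\bm{\omega})}\right)=0$. First I would note that $X\overline{Y}$ is a trigonometric polynomial that vanishes off the measure-zero set where $\re X=0$ or $\re Y=0$, hence vanishes identically, so $X(\bm{\omega})\overline{Y(\bm{\omega})}$ is real for every $\bm{\omega}$. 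Writing this as $X\overline{Y}=\overline{X}\,Y$ on the unit torus and multiplying by $\bm{z}^{-\bm{N}}$, the conjugate space-reversed polynomial $\widetilde{X}(\bm{z})=\bm{z}^{-\bm{N}}X^*(1/\bm{z}^*)$ lets me recast the hypothesis as the Laurent-polynomial identity
\begin{equation*}
  X(\bm{z})\widetilde{Y}(\bm{z})=\widetilde{X}(\bm{z})\,Y(\bm{z}),
\end{equation*}
that is, $X/\widetilde{X}=Y/\widetilde{Y}$ as rational functions in $\bm{z}^{-1}$.

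The key step is a coprimality claim: under the hypothesis that $X$ has no nontrivial conjugate symmetric factor, $X$ and $\widetilde{X}$ share no common irreducible factor. Indeed, if an irreducible $X_k$ divided both $X$ and $\widetilde{X}$, then (up to a trivial monomial) $X_k=\widetilde{X}_l$ for some irreducible factor $X_l$ of $X$; if $l=k$ then $X_k$ is itself conjugate symmetric, while if $l\neq k$ then $\widetilde{(X_kX_l)}=\widetilde{X}_k\widetilde{X}_l\sim X_lX_k$, so $X_kX_l$ is a nontrivial symmetric factor of $X$ --- either case contradicts the assumption. The same argument applies to $Y$, so both $X/\widetilde{X}$ and $Y/\widetilde{Y}$ are in lowest terms. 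Two equal rational functions in lowest terms have proportional numerators: from $X\widetilde{Y}=\widetilde{X}Y$ together with $\gcd(X,\widetilde{X})=\gcd(Y,\widetilde{Y})=1$, the unique factorization theorem gives $X\mid Y$ and $Y\mid X$, so $X=cY$ for some unit $c$. Because trivial linear-phase (monomial) factors are excluded for both $X$ and $Y$, the unit $c$ is a nonzero constant.

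It remains to force $c$ to be real. Evaluating $X(\bm{\omega})=cY(\bm{\omega})$ gives $\phi_x(\bm{\omega})=\phi_y(\bm{\omega})+\arg c$, and $\tan(\phi_x)=\tan(\phi_y)$ then requires $\arg c\equiv 0\pmod{\pi}$, whence $c=\beta\in\mathbb{R}$ and $x(\bm{n})=\beta y(\bm{n})$, as claimed.

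I expect the main obstacle to be the coprimality claim and its careful bookkeeping: one must track the trivial monomial and constant (unit) factors that the para-conjugation $\widetilde{(\cdot)}$ introduces, verify that $\widetilde{(\cdot)}$ preserves irreducibility so that the irreducible factors of $\widetilde{X}$ are exactly the conjugate space-reversals of those of $X$, and make precise the passage from ``real on the unit torus'' to an identity of Laurent polynomials via uniqueness of Fourier coefficients. The remaining algebra --- lowest-terms uniqueness and the extraction of the real scalar $\beta$ --- is then routine.
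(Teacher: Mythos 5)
Your proposal is correct and follows essentially the same route as the paper: both convert the tangent condition into the identity $X(\bm{z})\widetilde{Y}(\bm{z})=\widetilde{X}(\bm{z})Y(\bm{z})$ (the paper via realness of $G(\bm{z})=X(\bm{z})Y^*(1/\bm{z}^*)$ and analytic continuation) and then run the identical case analysis on irreducible factors --- $X_k=\alpha\widetilde{X}_i$ with $i=k$ giving a conjugate symmetric factor and $i\neq k$ giving the symmetric factor $X_i\widetilde{X}_i$ --- to conclude $X$ and $Y$ agree up to a trivial factor, with the phase condition pinning down the monomial and the realness of $\beta$. Your packaging of that case analysis as a coprimality statement $\gcd(X,\widetilde{X})=1$ is just a reorganization of the paper's argument, not a different proof.
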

\begin{proof}
  Consider the sequence $g(\bm{n})$
  \begin{equation*}
    g(\bm{n})=x(\bm{n})\star y(\bm{n}),
  \end{equation*}
whose $z$-transform is given by
\begin{equation*}
  G(\bm{z})=X(\bm{z})Y^*(1/\bm{z}^*).
\end{equation*}
Since the phase of the Fourier transform of $g(\bm{n})$ satisfies
\begin{equation*}
  \tan(\phi_g(\bm{\omega}))=0.
\end{equation*}
So it follows that $G(\bm{\omega})$ is real-valued. By analytic
continuation, we have
\begin{equation*}
  G(\bm{z})=G^*(1/\bm{z}^*)
\end{equation*} and
so
\begin{equation*}
  X(\bm{z})Y^*(1/\bm{z}^*)=X^*(1/\bm{z}^*)Y(\bm{z}).
\end{equation*}
Multiplying both sides of above equality by $\bm{z}^{-\bm{N}}$ results
in the following polynomial equation in $\bm{z}^{-1}$,
\begin{equation*}
  X(\bm{z})\widetilde{Y}(\bm{z})z^{-\bm{m}}=\widetilde{X}(\bm{z})Y(\bm{z})\bm{z}^{-\bm{n}},
\end{equation*}
where $\bm{m}$ and $\bm{n}$ are integer-valued vectors with
$\bm{m}\geq \bm{0}$ and $\bm{n}\geq\bm{0}$. Now consider an arbitrary
nontrivial irreducible factor $X_k(\bm{z})$ of $X(\bm{z})$. If
$X_k(\bm{z})$ is associated with a factor of $\widetilde{X}(\bm{z})$,
then
\begin{equation*}
  X_k(\bm{z})=\alpha \widetilde{X}_i(\bm{z})
\end{equation*}
for some $i$. If $i=k$, then
$X_k(\bm{z})=\alpha^2X_k(\bm{z})$. Therefore, $\alpha=\pm 1$ and
$X_k(\bm{z})$ is symmetric. If $i\neq k$, then
\begin{equation*}
  X_k(\bm{z})X_i(\bm{z})=\alpha\widetilde{X}_i(\bm{z})X_i(\bm{z})
\end{equation*}
and $\widetilde{X}_i(\bm{z})X_i(\bm{z})$ is a symmetric factor of
$X(\bm{z})$. Consequently, each nontrivial irreducible factor of
$X(\bm{z})$ must be associated with a factor of $Y(\bm{z})$. By the
same argument, each nontrivial irreducible factor of $Y(\bm{z})$ must
be associated with a factor of $X(\bm{z})$. Therefore, $X(\bm{z})$ and
$Y(\bm{z})$ may differ by at most a trivial factor, i.e.,
\begin{equation*}
  Y(\bm{z})=\beta\bm{z}^{\bm{k}}X(\bm{z}).
\end{equation*}
However, if the tangent of the phase of $x(\bm{n})$ and $y(\bm{n})$
are equal, then $\bm{k}=\bm{0}$. It completes the proof.
\end{proof}

\subsection{Structured illumination}
\label{sec:struct-illum}

We will focus on a special case of structured illumination. A phase
shift dependent on the location of the signal in space domain is added
to the signal before diffraction. If the original signal is denoted by
$x(\bm{n})$, then the actual transformed signal is 
\begin{equation*}
  \tilde{x}(\bm{n})=x(\bm{n})\exp\left(\frac{ik}{2l}\bm{n}\cdot\bm{n}\right).
\end{equation*}
If the diffraction is in the regime of Fraunhofer, the
phase shift can be result from optical instruments, such as optical
grating, ptychography and oblique illuminations (see~\cite{Candes2013}
and references therein), Some application
of those structured illumination can be found
in~\cite{zhang2015spread,Williams2010,liu2008phase,johnson2008coherent}. If we record the
magnitude of the diffraction patterns in the Fresnel regime, the phase shift is
automatically introduced in the original signal/image. In the latter
model setup, the vector $\bm{n}$ is 
two-dimensional and typically represents coordinates transverse to a
coordinate axis and scalar $k=2\pi/\lambda$ is the wavenumber, and
$\lambda$ is the illumination wavelength. For a complex signal, we consider the measurements
with different distance $l$s. This kind of multiple measurement approaches are referred as phase diversity in astronomy~\cite{Gonsalves2014,Fienup1998}.

The advantage of the structured illumination is that the uniqueness
up to a unit complex constant is generally guaranteed. For example,
assume that the signal is real-valued and the $z$-transform of the product
signal $\tilde{x}(\bm{n})$ has at most one irreducible nonconjugate symmetric
factor, then its equivalences should be of the form:
\begin{equation*}
  y(\bm{n})=
  \begin{cases}
    e^{i\theta}x(\bm{m}+\bm{n})\exp\left(\frac{ik}{2l}(\bm{m}+\bm{n})\cdot
      (\bm{m}+\bm{n})\right)/\exp(\frac{ik}{2l}\bm{n}\cdot\bm{n}),\\
    e^{i\theta}x^*(\bm{m}-\bm{n})\exp\left(\frac{-ik}{2l}(\bm{m}-\bm{n})\cdot
      (\bm{m}-\bm{n})\right)/\exp(\frac{ik}{2l}\bm{n}\cdot\bm{n}).
  \end{cases}
\end{equation*}
Since the signal is real-valued, we have that $y(\bm{n})=\pm
x(\bm{n})$. If the signal is complex, then two phase-shift oversampled
diffraction patterns along two distances $l_1,l_2$ guarantee the signal
$y(\bm{n})=e^{i\theta}x(\bm{n})$ for some real constant $\theta$. 

We consider the two-dimensional phase retrieval, the propagation
matrix with one phase shift diffraction pattern is given by the matrix (operator)
\begin{equation}
\label{eq:20}
  A(x(\bm{n})) =\Phi(x(\bm{n})\exp(id\bm{n}\cdot \bm{n})),
\end{equation}
where $\Phi$ is the oversampled two-dimensional discrete Fourier
transform (DFT). More specifically $\Phi\in\mathbb{C}^{\lvert
  \mathcal{M}\rvert\times \lvert\mathcal{N}\rvert}$ is the sub-column
matrix of the standard DFT on the
extended grid $\mathcal{M}$.

For the two diffraction pattern case, the propagation matrix
(operator) is the stacked DFTs, i.e.,
\begin{equation}
\label{eq:21}
  A(x(\bm{n}))=
  \begin{bmatrix}
    \Phi(x(\bm{n})\exp(id_1\bm{n}\cdot \bm{n}))\\
    \Phi(x(\bm{n})\exp(id_2\bm{n}\cdot \bm{n}))
  \end{bmatrix}.
\end{equation}

\section{Local Convergence of RAAR algorithm}
\label{sec:raar-algorithm}

\subsection{Notation}
\label{sec:raar-algorithm-1}

For
a two-dimensional signal (sequences/image)
$\bm{x}\in\mathbb{C}^{n_1\times n_2}$, we will denote it as
$\bm{x}\in\mathbb{C}^n,n=n_1\times n_2$ by vectorizing the matrix. We
consider the phase retrieval problem in two case: real and complex
case. Let $\mathcal{X}$ be a nonempty closed convex set in
$\mathbb{C}^n$ and 
\begin{equation*}
  [\bm{x}]_{\mathcal{X}}=\argmin_{\bm{x}'\in \mathcal{X}}\norm{\bm{x}'-\bm{x}}
\end{equation*}
is the projection onto $\mathcal{X}$. Sine the DFT operator $\Phi$ can
be represented by a DFT matrix, so the propagation operator can be
written as matrix $A\in\mathbb{C}^{m\times n}$, where $A$ is
isometric, that is $A^*A=I$. The data is $\bm{b}=\lvert
A\bm{x}\rvert\in\mathbb{R}^m$. We focus on two cases: 
\begin{itemize}
\item [(a)]\emph{One-pattern case}: $A$ is given by \eqref{eq:20}, $\mathcal{X}=\mathbb{R}^n$, and
$m=2n$, 
\item [(b)]\emph{Two-pattern case}: $A$ is given by \eqref{eq:21},
$\mathcal{X}=\mathbb{C}^n$, and $m=4n$.
\end{itemize}

\subsection{RAAR algorithm in Fourier domain}
\label{sec:raar-algorithm-2}

Phase retrieval can be formulated as the following feasibility problem
in the Fourier domain
\begin{equation}
  \label{eq:4}
  \text{find}\quad \hat{\bm{y}}\in A\mathcal{X}\cap \mathcal{Y},
\end{equation}
where $\mathcal{Y}$ is the set which satisfies the Fourier domain
constraint, i.e., $\mathcal{Y}=\{\,\bm{y}\in\mathbb{C}^m\mid \lvert
\bm{y}\rvert=\bm{b}\,\}$. Let $P_1$ be the projection onto
$A\mathcal{X}$ and $P_2$ the projection onto $\mathcal{Y}$. For the
phase retrieval problem, the projections are of the following form:
\begin{equation}
  \label{eq:7}
  P_1\bm{y}=A[A^*\bm{y}]_{\mathcal{X}},\quad P_2\bm{y}=\bm{b}\circ
  \frac{\bm{y}}{\lvert \bm{y}\rvert},\quad \bm{y}\in\mathbb{C}^m,
\end{equation}
where $\circ$ is the Hadarmard product, which operates element-wise
product of the vectors. When $\lvert y\rvert=0$, the phase can be
assigned arbitrarily and we set $y/\lvert y\rvert=0$.

The RAAR method in Fourier
domain is given by the following iteration:
\begin{equation}
  \label{eq:8}
  \bm{y}_{k+1}=\beta T(\bm{y}_{k})+(1-\beta)P_2(\bm{y}_k),
\end{equation}
where
\begin{equation}
  \label{eq:9}
  T(\bm{y}_{k})=\frac{1}{2}(R_1R_2+I)(\bm{y}_{k})= [I+P_1(2P_2-I)](\bm{y}_k)
\end{equation}
 is the Douglas-Rachford iteration and $R_i=2P_i-I$ is the reflection
 operators.

The RAAR iteration can be written as the following form:
\begin{align}
  \label{eq:10}
  \bm{y}_{k+1}&=[\beta(I+P_1(2P_2-I))-(2\beta-1)P_2](\bm{y}_k)\\
&=\beta \Bigl(\bm{y}_k+A\left[A^*\left(2\bm{b}\circ\frac{\bm{y}_k}{\lvert\bm{y}_k\rvert}-\bm{y}_k\right)\right]_{\mathcal{X}}-\frac{2\beta-1}{\beta}\bm{b}\circ\frac{\bm{y}_k}{\lvert\bm{y}_k\rvert}\Bigr).
\end{align}

Note that the RAAR algorithm with $\beta=0.5$ becomes ER (Error Reduction) algorithm in
Fourier domain, since $P_1$ is linear for
$\mathcal{X}=\mathbb{R}^n$. When $\beta=1$, the unrelaxed RAAR
algorithm becomes HIO algorithm in
Fourier domain~\cite{Chen2015}. Since each of the algorithms involve the same basic
operations at each iteration, the rate of convergence of one algorithm
relative to another boils down to iteration counts. As we will demonstrate in numerical
simulations, ER algorithm stagnates at a local minimizer, while the HIO has the tendency to
escape the local minimizes~\cite{Marchesini2007}. Stability is of its
importance in numerical algorithm, which refers to the property that
the algorithm reliably approaches a neighborhood of a solution and
remains here, especially in the presence of noise. A common and
vexing problem for the HIO algorithm is that iterates will wander away from the neighborhood of a solution. The RAAR algorithm
is a trade-off between the ER algorithm and HIO algorithm, whose performance 
depends on the relaxation parameter $\beta$. Generally, RAAR algorithm
avoids the oscillation and instability of the HIO algorithm.
\subsection{Local convergence}
\label{sec:local-convergence}

In the following, we assume that the data $\bm{b}\neq
\bm{0}$ and $\lvert A\bm{x}\rvert\neq\bm{0}$ at the neighborhood of
the solution to phase retrieval. We consider the operator
\begin{equation*}
  K(\bm{y})=\bm{y}+AA^*\left(2\bm{b}\circ\frac{\bm{y}}{\lvert \bm{y}\rvert}-\bm{y}\right)-\gamma\bm{b}\circ\frac{\bm{y}}{\lvert \bm{y}\rvert}.
\end{equation*} 
According to the definition of G\^ateaux derivative, it follows that
\begin{align*} K(\bm{y}+\epsilon\bm{\eta})-K(\bm{y})&=\epsilon(I-AA^*)\bm{\eta}+(2 AA^*-\gamma I)\left(\frac{\bm{y}+\epsilon\bm{\eta}}{\lvert\bm{y}+\epsilon\bm{\eta}\rvert}-\frac{\bm{y}}{\lvert\bm{y}\rvert}\right)\circ \bm{b}\\
&=\epsilon(I-AA^*)\bm{\eta}+i\epsilon(2 AA^*-\gamma I)\diag\left(\frac{\bm{y}}{\lvert\bm{y}\rvert}\right)\im\left(\frac{\overline{\bm{y}}\circ\bm{\eta}}{\lvert\bm{y}\rvert^2}\right)\circ\bm{b},
\end{align*}
where we use the equality ($\lvert z\rvert \neq 0$),
\begin{equation*}
  D\left(\frac{z}{\lvert z\rvert}\right)(h)=\frac{h}{\lvert z\rvert}-\frac{z\re(\bar{z}h)}{\lvert z\rvert^3}.
\end{equation*}
We denote
\begin{equation*}
  \Omega =\diag\left(\frac{\bm{y}}{\lvert\bm{y}\rvert}\right),\quad B=\Omega^*A,
\end{equation*}
then it yields
\begin{align*}
  K(\bm{y}+\epsilon\bm{\eta})-K(\bm{y})&=\epsilon\Omega(I-BB^*)\Omega^*\bm{\eta}+i\epsilon\Omega(2 BB^*-\gamma I)\diag\left(\frac{\bm{b}}{\lvert \bm{y}\rvert}\right)\im(\Omega^*\bm{\eta})+o(\epsilon)\\
&=\epsilon\Omega J(\bm{v})+o(\epsilon),
\end{align*}
where
\begin{equation*}
  J(\bm{v})=(I-BB^*)\bm{v}+i(2 BB^*-\gamma I)\diag\left(\frac{\bm{b}}{\lvert \bm{y}\rvert}\right)\im(\bm{v}),\quad \bm{v}=\Omega^*\bm{\eta}.
\end{equation*}
When $\lvert\bm{y}\rvert=\bm{b}$, we have
\begin{equation*}
  J(\bm{v})=(I-BB^*)\bm{v}+i(2 BB^*-\gamma I)\im(\bm{v}).
\end{equation*}

The main result is local, geometric convergence of the RAAR
algorithm. The proof, which is a mimic of the
same result in literature~\cite{Chen2015}, is given in next section.
\begin{theorem}
\label{thm:1}
  Let $\bm{x}_0\in\mathbb{C}^n$ is the solution of the phase retrieval
  problem and the propagation matrix $A\in\mathbb{C}^{m\times n}$ is
  isometric. Suppose $m\geq 2n$ and 
  \begin{equation}
    \label{eq:11}
    \max_{\bm{z}\in\mathbb{C}^n,\,\bm{z}\perp i\bm{x}_0}
    \norm{\bm{z}}^{-1}\norm{\im(B\bm{z})}<1,\quad B =
    \diag\left(\frac{\overline{A\bm{x}_0}}{\lvert A\bm{x}_0\rvert}\right)A.
  \end{equation}
Let $\bm{y}_k$ be an RAAR iteration sequence and
$\bm{x}_{k}=A^*\bm{y}_k,k=1,\ldots$. If $\bm{x}_1$ is sufficient close
to $\bm{x}_0$, then for some constant $\eta<1$,
\begin{equation}
  \label{eq:12}
  \dist(\bm{x}_k,\bm{x}_0)\leq\eta^{k-1}\dist(\bm{x}_1,\bm{x}_0),
\end{equation}
where $\dist(\bm{x}_k,\bm{x}_0)$ is the distance
\begin{equation}
  \label{eq:13}
  \dist(\bm{x}_k,\bm{x}_0)=\min_{c\in\mathbb{C},\lvert c\rvert=1}\norm{c\bm{x}_k-\bm{x}_0}.
\end{equation}
\end{theorem}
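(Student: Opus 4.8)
The plan is to read the RAAR recursion \eqref{eq:10} as the fixed-point iteration $\bm{y}_{k+1}=\beta K(\bm{y}_k)$ with $\gamma=(2\beta-1)/\beta$, and to run a standard Ostrowski-type local convergence analysis at the relevant fixed point, the only nonstandard features being the single neutral direction forced by the global phase ambiguity and the conjugate-linear (non-holomorphic) nature of the Jacobian. First I would check that $\bm{y}_\ast=A\bm{x}_0$ is a fixed point: since $\lvert A\bm{x}_0\rvert=\bm{b}$ the magnitude projection $P_2$ fixes it, and since $A^*A=I$ and $\bm{x}_0\in\mathcal{X}$ the set projection $P_1$ fixes it, so $\beta K(\bm{y}_\ast)=\bm{y}_\ast$; moreover every $cA\bm{x}_0$ with $\lvert c\rvert=1$ is fixed too, producing a one-parameter circle of fixed points and hence a neutral direction. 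Next I would read the Jacobian off the G\^ateaux computation recorded before the theorem: in the rotated coordinate $\bm{v}=\Omega^*\bm{\eta}$ the derivative of $\beta K$ at $\bm{y}_\ast$ is $\beta\,\Omega J(\cdot)\,\Omega^*$ with $J(\bm{v})=(I-BB^*)\bm{v}+i(2BB^*-\gamma I)\im(\bm{v})$ and $B=\Omega^*A$ as in \eqref{eq:11}.

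Using $\im(\bm{v})=(\bm{v}-\bar{\bm{v}})/2i$ I would put $J$ in the conjugate-linear normal form $J(\bm{v})=\tfrac{1}{2\beta}\bm{v}-(BB^*-\tfrac{\gamma}{2}I)\bar{\bm{v}}$, so that $\beta J(\bm{v})=\tfrac12\bm{v}+\beta C\bar{\bm{v}}$ with $C=\tfrac{\gamma}{2}I-BB^*$; the cancellation $\beta(1-\gamma/2)=\tfrac12$ is what makes the estimate clean. The heart of the argument is the spectrum of the real-linear map $\beta J$. Realifying (writing $\bm{v}\leftrightarrow(\re\bm{v},\im\bm{v})$, with $\mathcal{C}$ the conjugation) turns $\beta J$ into $\tfrac12 I+S$, where $S$ is the realification of the conjugate-linear part and $S^2$ is the realification of $\beta^2 C\bar{C}$; thus the iteration eigenvalues are governed by the spectrum of $C\bar{C}=(\tfrac{\gamma}{2}I-\overline{BB^*})(\tfrac{\gamma}{2}I-BB^*)$, which measures the interaction between $\operatorname{range}(B)$ and its conjugate.

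I would first isolate the neutral mode. Since $B\bm{x}_0=\lvert A\bm{x}_0\rvert=\bm{b}$ is real, the vector $\bm{v}=i\bm{b}$ satisfies $\beta J(i\bm{b})=i\bm{b}$ (eigenvalue $1$), and as $B^*(i\bm{b})=i\bm{x}_0$ this is exactly the mode removed by the constraint $\bm{z}\perp i\bm{x}_0$ in \eqref{eq:11}. Then I would convert the hypothesis \eqref{eq:11} into a spectral gap: using the identity $\norm{\im(B\bm{z})}^2=\tfrac12\norm{\bm{z}}^2-\tfrac12\re(\bm{z}^\top B^\top B\bm{z})$ (valid since $B$ is isometric), the quantity $\sigma:=\max_{\bm{z}\perp i\bm{x}_0}\norm{\bm{z}}^{-1}\norm{\im(B\bm{z})}<1$ bounds the eigenvalues of $C\bar{C}$ on the orthogonal complement of $i\bm{x}_0$ strictly below the value they take in the neutral direction, so that every eigenvalue of $\beta J$ other than the neutral $1$ lies strictly inside the unit disk, with rate controlled by $\tfrac12(1+\sigma)<1$. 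This step---showing that \eqref{eq:11} controls $\operatorname{spec}(C\bar{C})$ transverse to $i\bm{x}_0$ while tracking the conjugate-linear coupling that prevents $\operatorname{range}(B)$ from being invariant---is where I expect the main difficulty, and it is also where the oversampling assumption $m\ge 2n$ enters to guarantee that such a $\sigma<1$ is attainable.

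Finally, with the linearization a strict contraction transverse to the phase circle, I would invoke the standard local-convergence (Ostrowski) theorem: $\beta K$ is $C^1$ near $\bm{y}_\ast$ (because $\lvert\bm{y}\rvert$ stays bounded away from $\bm{0}$ there, by the standing assumption $\bm{b}\neq\bm{0}$), so iterates starting close enough converge geometrically to the circle of fixed points, any rate $\eta\in(\tfrac12(1+\sigma),1)$ absorbing the higher-order remainder, and the neutral direction being factored out precisely by the quotient distance \eqref{eq:13}. Passing from $\bm{y}$ to $\bm{x}_k=A^*\bm{y}_k$ costs nothing: for $\lvert c\rvert=1$ one has $\norm{\bm{x}_k-c\bm{x}_0}=\norm{A^*(\bm{y}_k-cA\bm{x}_0)}\le\norm{\bm{y}_k-c\bm{y}_\ast}$ since $A^*$ is nonexpansive and isometric on $\operatorname{range}(A)\ni\bm{y}_\ast$, which yields the estimate \eqref{eq:12}.
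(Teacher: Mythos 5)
Your overall strategy---linearize the $\bm{y}$-iteration at $\bm{y}_*=A\bm{x}_0$, show the Jacobian is a strict contraction transverse to the phase circle $\{\,c\bm{y}_*:\lvert c\rvert=1\,\}$, and invoke Ostrowski---has a genuine gap at its central step, and that step is in fact false as stated. Your normal form $\beta J(\bm{v})=\tfrac12\bm{v}+\beta C\bar{\bm{v}}$ with $\beta C=(\beta-\tfrac12)I-\beta BB^*$ is correct, but now take any \emph{real} vector $\bm{w}\in\mathbb{R}^m$ with $B^*\bm{w}=\bm{0}$; such vectors exist whenever $m>2n$ (e.g.\ the two-pattern case $m=4n$), since the constraint imposes only $2n$ real conditions on $m$ real unknowns. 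Then $\bar{\bm{w}}=\bm{w}$ and $\beta J(\bm{w})=\tfrac12\bm{w}+(\beta-\tfrac12)\bm{w}=\beta\bm{w}$. So the linearization has eigenvalue $\beta$ on a subspace of real dimension at least $m-2n$ transverse to the phase direction $i\bm{b}$. For $\beta=1$ (HIO, which the theorem and the remark after its proof are meant to cover) these are extra neutral directions: the $\bm{y}$-iterates do \emph{not} converge to the phase circle and Ostrowski does not apply. For $\beta<1$ the transverse spectral radius is at least $\beta$, so your claimed rate $\tfrac12(1+\sigma)$ is unobtainable, and hypothesis \eqref{eq:11} alone does not control these modes. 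The conjugate-linear coupling you flagged as the main difficulty is real, but this more basic obstruction sinks the plan before that analysis starts.

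The paper's proof sidesteps exactly this: the theorem concerns $\bm{x}_k=A^*\bm{y}_k$, not $\bm{y}_k$, and applying $B^*=A^*\Omega_0$ to the linearized error annihilates the troublesome component, because $B^*(I-BB^*)=0$ and $\beta(2-\gamma)=1$ give $\beta B^*J(\bm{v})=iB^*\im(\bm{v})$ exactly. The whole convergence estimate then reduces to the single inequality $\norm{B^*\im(\bm{v}_k)}=\norm{\mathcal{B}^T\im(\bm{v}_k)}\le\sigma_2\norm{\im(\bm{v}_k)}$, valid because the optimal-phase choice $c_k$ forces $\im(\bm{v}_k)\perp\lvert\bm{y}_0\rvert$, the leading left singular vector of $\mathcal{B}$, and $\sigma_2<1$ is precisely \eqref{eq:11}. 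To rescue your route you would have to measure contraction only after projecting by $B^*$ (equivalently, track $\bm{x}_k$ from the start), which is the paper's argument; your correct observations (the fixed-point circle, the neutral mode $i\bm{b}$, the identity for $\norm{\im(B\bm{z})}^2$) are all subsumed by that shorter computation. A smaller final point: even granting a $\bm{y}$-space contraction, your last step yields $\dist(\bm{x}_k,\bm{x}_0)\le\eta^{k-1}\min_{\lvert c\rvert=1}\norm{c\bm{y}_1-\bm{y}_0}$, which dominates but need not equal the right-hand side of \eqref{eq:12}, since $\bm{y}_1$ need not lie in the range of $A$.
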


\section{Spectral gap and the proof of Theorem~\ref{thm:1}}
\label{sec:eigenvalue-analysis}

In this section, we provide the result that \eqref{eq:11} is satified
for almost all two-dimensional phase retrieval. We 
introduce the following matrix $\mathcal{B}$ and map $G$, which maps
a complex to its real and image parts.
\begin{equation*}
  \mathcal{B}=
  \begin{bmatrix}
    \re(B)&-\im(B)
  \end{bmatrix}\in\mathbb{R}^{m\times 2n}, \text{ and } G(\bm{z})=(\re(\bm{z}),\im(\bm{z}))^T.
\end{equation*}
where $B=\Omega_0^*A$. We have
\begin{equation*}
  G(B\bm{z})=
  \begin{bmatrix}
    \mathcal{B}G(\bm{z})\\
    \mathcal{B}G(-i\bm{z})
  \end{bmatrix}\in\mathbb{R}^{2m},\quad \bm{z}\in\mathbb{C}^n.
\end{equation*}
and we have
\begin{equation*}
  \mathcal{B}^T=
  \begin{bmatrix}
    \re(B^*)\\\im(B^*)
  \end{bmatrix}\in\mathbb{R}^{2n\times m}\quad,\norm{B^*\im(\bm{v})}=\norm{\mathcal{B}^T\im(\bm{v})}.
\end{equation*}

We first derive the
singular values of the corresponding
matrix $\mathcal{B}$.
Since, $B\bm{x}_0=\lvert \bm{y}_0\rvert=\lvert \bm{b}\rvert$, so
\begin{equation*}
  \mathcal{B}G(\bm{x}_0)=\lvert \bm{y}_0\rvert,\quad \mathcal{B}G(-i\bm{x}_0)=0.
\end{equation*}
Let $\mathcal{B}=U\Sigma V^*$, and $\sigma_1\geq
\sigma_2\geq\cdots\geq\sigma_{2n}$ be the singular values of matrix
$\mathcal{B}$, then $\sigma_1=1$,
$\bm{v}_1=G(\bm{x}_0),\bm{u}_1=\lvert \bm{y}_0\rvert$ and
$\sigma_{2n}=0,\bm{v}_{2n}=G(-i\bm{x}_0)$.
\begin{lem}
\begin{equation*}
  \sigma_2=\max_{\bm{w}\in\mathbb{R}^{2n},\bm{w}\perp
  \bm{v}_1}\frac{\norm{\mathcal{B}\bm{w}}}{\norm{\bm{w}}}=\max_{\bm{z}\in\mathbb{C}^n,\bm{z}\perp
    i\bm{x}_0}\frac{\norm{\im(B\bm{z})}}{\norm{\bm{z}}}.
\end{equation*}
\end{lem}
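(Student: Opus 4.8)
The plan is to prove the two equalities in turn: first the purely real statement that $\sigma_2$ equals the constrained quotient $\max_{\bm{w}\perp\bm{v}_1}\norm{\mathcal{B}\bm{w}}/\norm{\bm{w}}$, which is a standard deflation fact about singular values, and then the translation of that real quotient into the complex quantity $\max_{\bm{z}\perp i\bm{x}_0}\norm{\im(B\bm{z})}/\norm{\bm{z}}$ by means of the real-linear identification $G$ already introduced.

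For the first equality I would use the singular value decomposition $\mathcal{B}=U\Sigma V^*$. Since the right singular vectors $\{\bm{v}_1,\ldots,\bm{v}_{2n}\}$ form an orthonormal basis of $\mathbb{R}^{2n}$ and $\bm{v}_1$ is a right singular vector for the largest singular value $\sigma_1=1$, every $\bm{w}\perp\bm{v}_1$ expands as $\bm{w}=\sum_{k\ge2}c_k\bm{v}_k$, so that $\norm{\mathcal{B}\bm{w}}^2=\sum_{k\ge2}\sigma_k^2 c_k^2\le\sigma_2^2\norm{\bm{w}}^2$, with equality attained at $\bm{w}=\bm{v}_2$. This yields $\sigma_2=\max_{\bm{w}\perp\bm{v}_1}\norm{\mathcal{B}\bm{w}}/\norm{\bm{w}}$.

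For the second equality I would introduce the real-linear map $\Psi\colon\mathbb{C}^n\to\mathbb{R}^{2n}$ defined by $\Psi(\bm{z})=G(-i\bm{z})$ and verify three facts. First, $\Psi$ is a bijective isometry, since $G$ identifies $\mathbb{C}^n$ with $\mathbb{R}^{2n}$ as real inner-product spaces and multiplication by $-i$ is unitary on $\mathbb{C}^n$, whence $\norm{\Psi(\bm{z})}=\norm{-i\bm{z}}=\norm{\bm{z}}$. Second, the identity $\mathcal{B}\Psi(\bm{z})=\mathcal{B}G(-i\bm{z})=\im(B\bm{z})$, which is exactly the lower block of the displayed decomposition of $G(B\bm{z})$ established above. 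Third, the constraint correspondence $\Psi(i\bm{x}_0)=G(-i\cdot i\bm{x}_0)=G(\bm{x}_0)=\bm{v}_1$, so that, $\Psi$ being an isometry, it carries $\{i\bm{x}_0\}^\perp$ bijectively onto $\{\bm{v}_1\}^\perp$. Substituting $\bm{w}=\Psi(\bm{z})$ and combining these three facts converts the real maximum into the complex one and finishes the proof.

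The main obstacle is the third item, namely recognizing that the correct substitution is $\bm{w}=G(-i\bm{z})$ rather than the naive $\bm{w}=G(\bm{z})$. The rotation by $-i$ is precisely what converts the real part $\re(B\bm{z})=\mathcal{B}G(\bm{z})$ into the imaginary part $\im(B\bm{z})$, and simultaneously sends the direction $\bm{x}_0$ to $i\bm{x}_0$, so that the deflation constraint $\bm{w}\perp\bm{v}_1=G(\bm{x}_0)$ becomes exactly $\bm{z}\perp i\bm{x}_0$. Once the orthogonality on $\mathbb{C}^n$ is read as the real-part inner product, so that $G$ is a genuine isometry, the remaining verifications are routine.
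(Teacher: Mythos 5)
Your proof is correct and takes essentially the same route as the paper: the change of variables $\bm{w}=G(-i\bm{z})$ combined with the identity $\im(B\bm{z})=\mathcal{B}G(-i\bm{z})$, the isometry $\norm{G(-i\bm{z})}=\norm{\bm{z}}$, and the correspondence $\bm{z}\perp i\bm{x}_0\Leftrightarrow G(-i\bm{z})\perp \bm{v}_1$. You are somewhat more explicit than the paper, which leaves the deflation characterization of $\sigma_2$ implicit and argues the two inequalities separately, whereas you obtain both at once by noting that $\bm{z}\mapsto G(-i\bm{z})$ is a bijective isometry carrying $\{i\bm{x}_0\}^{\perp}$ onto $\{\bm{v}_1\}^{\perp}$.
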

\begin{proof}
  Let $\bm{z}_0\perp i\bm{x}_0$ and
  $\sigma_2=\frac{\norm{\im(B\bm{z}_0)}}{\norm{\bm{z}_0}}$, then 
 $0=\re\langle \bm{z}_0,i\bm{x}_0\rangle=\langle
G(-i\bm{z}_0),G(\bm{x}_0\rangle=0$. Take $\bm{w}_0=G(-i\bm{z}_0)$, we have $\max_{\bm{z}\in\mathbb{C}^n,\bm{z}\perp
    i\bm{x}_0}\norm{\bm{z}}^{-1}\norm{\im(B\bm{z})}\leq \max_{\bm{w}\in\mathbb{R}^{2n},\bm{w}\perp
  \bm{v}_1}\norm{\bm{w}}^{-1}\norm{\mathcal{B}\bm{w}}$. The inverse
inequality can be obtained by the fact that 
$\im(B\bm{z})=\mathcal{B}G(-i\bm{z})$ and $\norm{G(-i\bm{z})}=\norm{\bm{z}}$. This
completes the proof.  
\end{proof}
 
\begin{lem}
Let $\sigma_1\geq
\sigma_2\geq\cdots\geq\sigma_{2n}$ are the singular values of $\mathcal{B}$, we have that
$\sigma_k^2+\sigma_{2n+1-k}^2=1$ and
$\bm{v}_{2n+1-k}=G(-iG^{-1}(\bm{v}_k))$ and $\bm{v}_k=G(iG^{-1}(\bm{v}_{2n+1-k}))$.
\end{lem}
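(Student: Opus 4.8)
The plan is to reduce everything to a single structural identity for the symmetric matrix $M=\mathcal{B}^T\mathcal{B}$: that its spectrum is symmetric about $1/2$ and that the reflection symmetry is implemented by the orthogonal ``complex structure'' that the statement denotes $G(-iG^{-1}(\cdot))$.

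First I would exploit isometry. Since $\Omega_0=\diag(\bm{y}_0/\lvert\bm{y}_0\rvert)$ is diagonal and unitary and $A^*A=I$, the matrix $B=\Omega_0^*A$ satisfies $B^*B=A^*\Omega_0\Omega_0^*A=I$. Writing $B=P+iQ$ with $P=\re(B)$, $Q=\im(B)$ and splitting $B^*B=I$ into real and imaginary parts gives the two relations $P^TP+Q^TQ=I$ and $P^TQ=Q^TP$ (the latter says $P^TQ$ is symmetric). These are the only facts about $B$ I will use.

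Next I introduce the orthogonal matrix $J=\bigl[\begin{smallmatrix} 0 & I \\ -I & 0\end{smallmatrix}\bigr]$ and record the elementary facts $J^T=J^{-1}=-J$, $J^2=-I$, together with the observation that on $\mathbb{R}^{2n}$ the maps $\bm{w}\mapsto G(-iG^{-1}(\bm{w}))$ and $\bm{w}\mapsto G(iG^{-1}(\bm{w}))$ are exactly $\bm{w}\mapsto J\bm{w}$ and $\bm{w}\mapsto J^{-1}\bm{w}$ (immediate from $G(-i(\bm{a}+i\bm{b}))=(\bm{b},-\bm{a})^T$). Using the block form
\[
  \mathcal{B}^T\mathcal{B}=
  \begin{bmatrix} P^TP & -P^TQ \\ -Q^TP & Q^TQ\end{bmatrix}
\]
and the two relations above, a direct computation yields the key identity
\[
  \mathcal{B}^T\mathcal{B}+J^T(\mathcal{B}^T\mathcal{B})J=I_{2n}.
\]
Equivalently, with $M=\mathcal{B}^T\mathcal{B}$ one gets $MJ=J(I-M)$, so $M\bm{v}_k=\sigma_k^2\bm{v}_k$ implies $M(J\bm{v}_k)=(1-\sigma_k^2)(J\bm{v}_k)$. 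Thus $J$ carries the $\sigma_k^2$-eigenspace of $M$ isometrically onto its $(1-\sigma_k^2)$-eigenspace, and the spectrum of $M$ is symmetric about $1/2$.

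Finally I would translate this spectral symmetry into the indexed statement. Sorting the eigenvalues $\sigma_1^2\geq\cdots\geq\sigma_{2n}^2$, symmetry of the multiset about $1/2$ forces $\sigma_k^2+\sigma_{2n+1-k}^2=1$; choosing $\bm{v}_{2n+1-k}=J\bm{v}_k=G(-iG^{-1}(\bm{v}_k))$ supplies the claimed eigenvector, and applying $J^{-1}$ gives the inverse relation $\bm{v}_k=G(iG^{-1}(\bm{v}_{2n+1-k}))$. As a consistency check, $k=1$ recovers $\bm{v}_{2n}=G(-i\bm{x}_0)$ and $\sigma_1^2+\sigma_{2n}^2=1+0=1$, matching the values already computed before the lemma. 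The one place requiring care is degeneracy: when a singular value is repeated (in particular when $\sigma_k=1/\sqrt2$, so the eigenvalue $1/2$ occurs), the individual vectors $\bm{v}_k$ are not unique, and one must choose orthonormal bases of the matched eigenspaces so that $\bm{v}_{2n+1-k}=J\bm{v}_k$ holds throughout; this is always possible precisely because $J$ is an isometric isomorphism between the two eigenspaces and, on the $1/2$-eigenspace, restricts to a complex structure (forcing even dimension, so the space pairs with itself). I expect this bookkeeping, rather than any computation, to be the only genuinely delicate point.
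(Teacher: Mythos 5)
Your proof is correct, and it reaches the lemma by a related but genuinely different route from the paper. The common core is the identity $\mathcal{B}^T\mathcal{B}+J^T(\mathcal{B}^T\mathcal{B})J=I$, where $J:\bm{w}\mapsto G(-iG^{-1}(\bm{w}))$ is your orthogonal complex structure: the paper obtains this only as a quadratic-form identity, writing $\norm{G(\bm{w})}^2=\norm{B\bm{w}}^2=\norm{\mathcal{B}G(\bm{w})}^2+\norm{\mathcal{B}G(-i\bm{w})}^2$ directly from the isometry of $B$ and the real/imaginary splitting of $B\bm{w}$, whereas you derive the matrix identity by block computation from $B^*B=I$ (i.e.\ $P^TP+Q^TQ=I$ and $P^TQ=Q^TP$). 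The two derivations are equivalent (polarize the quadratic form), but the arguments diverge in how the pairing of singular values is extracted: the paper runs an induction on $k$ through the Courant--Fischer min--max characterization, converting each maximum over $\bm{w}\perp\bm{v}_1,\ldots,\bm{v}_{k-1}$ into a minimum over $\bm{w}'=J\bm{w}\perp\bm{v}_{2n},\ldots,\bm{v}_{2n+2-k}$, while you use the intertwining relation $MJ=J(I-M)$ for $M=\mathcal{B}^T\mathcal{B}$ to carry the $\sigma_k^2$-eigenspace isometrically onto the $(1-\sigma_k^2)$-eigenspace. Your version buys two things: it avoids the bookkeeping of the induction (the paper's step ``$\bm{w}\perp\bm{v}_1,\ldots,\bm{v}_{j-1}$ implies $\bm{w}'\perp\bm{v}_{2n},\ldots,\bm{v}_{2n+2-j}$'' silently uses the inductive hypothesis together with the orthogonality of $J$), and it confronts explicitly the degenerate case $\sigma_k^2=1/2$, where the eigenvectors must be chosen compatibly with $J$ --- a point the paper does not address at all. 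The paper's min--max argument, for its part, needs no discussion of eigenspaces or multiplicities and works directly with the variational definition of the $\sigma_k$. Both are valid proofs of the statement.
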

\begin{proof}
  Since $B$ is isometric, we have $\norm{\bm{w}}=\norm{B\bm{w}}$,
  $\forall \bm{w}\in\mathbb{C}^n$.

We have that
\begin{equation*}
  \norm{B\bm{w}}^2=\norm{G(B\bm{w})}^2=\norm{\mathcal{B}G(\bm{w})}^2+\norm{\mathcal{B}G(-i\bm{w})}^2,
\end{equation*}
Since $\norm{\bm{w}}^2=\norm{G(\bm{w})}^2$, we have
\begin{equation}
\label{eq:888}
  \norm{G(\bm{w})}^2=\norm{\mathcal{B}G(\bm{w})}^2+\norm{\mathcal{B}G(-i\bm{w})}^2.
\end{equation}

We prove the lemma by induction. It is obvious for $k=1$. Recall the
Courant-Fischer theorem, which characterize the singular values, we
have
\begin{align*}
  \sigma_j &= \max_{\norm{\bm{w}}=1}\norm{\mathcal{B}\bm{w}}, \quad
  \bm{w}\perp \bm{v}_1,\ldots,\bm{v}_{j-1},\\
\sigma_{2n+1-j}&=\min_{\norm{\bm{w}'}=1}\norm{\mathcal{B}\bm{w}'}, \quad
  \bm{w}'\perp \bm{v}_{2n},\ldots,\bm{v}_{2n+2-j}.
\end{align*}
Hence, by~\eqref{eq:888}, it follows that
\begin{align*}
  \sigma_k^2=\max_{\norm{\bm{w}}=1}\norm{\mathcal{B}\bm{w}}^2=1-\min{\norm{\bm{w}'}=1}\norm{\mathcal{B}\bm{w}'}^2,\quad \bm{w}\perp\bm{v}_1,\ldots,\bm{v}_{k-1},
\end{align*}
where $\bm{w}'=G(-iG^{-1}(\bm{w}))$. The condition $\bm{w}\perp
\bm{v}_1,\ldots,\bm{v}_{j-1}$ implies $\bm{w}'\perp
\bm{v}_{2n},\ldots,\bm{v}_{2n+2-j}$, where $\bm{v}_{2n+1-k}=G(-iG^{-1}(\bm{v}_k))$.
\end{proof}

To this end, we have to show the assumption of Theorem~\ref{thm:1} is satisfied for our
measurement setup for two-dimensional phase retrieval
problem. Furthermore, we show
that this condition~\eqref{eq:11} is satisfied for almost all two-dimensional
signals with at least one oversampled coded
diffraction patterns. This excludes some two-dimensional signals whose
$z$-transform have nontrivial symmetric factors,
see Theorem~\ref{thm:3}. However, it should holds for the signals we
consider in practical. 

The fact that the second singular value $\sigma_2$ is strictly less than one
is the immediate consequence of the following result.
\begin{prop}
  Let $A$ be isometric and $B=\Omega_0^*A$. Then
  $\norm{\im(B\bm{z})}=1$ holds for some unit vector $\bm{z}$ if and
  only if 
  \begin{equation*}
    \re(\bm{a}_j^*\bm{z})\re(\bm{a}_j\bm{x}_0)+\im(\bm{a}_j^*\bm{z})\im(\bm{a}_j\bm{x}_0)=0,\forall j=1,\ldots,m,
  \end{equation*}
where $\bm{a}_j^*$ are the rows of $A$, or equivalently
\begin{equation*}
 \bm{\omega}=\pm i\bm{\omega}_0, \quad\bm{\omega}=\frac{A\bm{z}}{\lvert
    A\bm{z}\rvert}, \quad\bm{\omega}_0=\frac{A\bm{u}_0}{\lvert
    A\bm{u}_0\rvert},
\end{equation*}
where the $\pm$ sign may be element-wise-dependent.
\end{prop}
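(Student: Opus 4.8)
The plan is to use the isometry of $B$ to convert the magnitude condition $\norm{\im(B\bm{z})}=1$ into a statement that $B\bm{z}$ is purely imaginary, and then to unpack that statement one coordinate at a time. Since $A$ is isometric and $\Omega_0=\diag\left(\frac{A\bm{x}_0}{\lvert A\bm{x}_0\rvert}\right)$ is a diagonal unitary matrix (its diagonal entries are the unit-modulus phases of $A\bm{x}_0$, which are well defined because $\lvert A\bm{x}_0\rvert\neq\bm{0}$ near the solution), the product $B=\Omega_0^*A$ is again isometric, so $\norm{B\bm{z}}=\norm{\bm{z}}=1$ for any unit vector $\bm{z}$. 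Splitting $B\bm{z}$ into real and imaginary parts gives $\norm{\re(B\bm{z})}^2+\norm{\im(B\bm{z})}^2=\norm{B\bm{z}}^2=1$, so $\norm{\im(B\bm{z})}=1$ is equivalent to $\re(B\bm{z})=\bm{0}$. The whole statement thus reduces to characterizing when every component of $B\bm{z}$ is purely imaginary.

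First I would compute the $j$-th component explicitly. Writing $(\bm{\omega}_0)_j=(A\bm{x}_0)_j/\lvert(A\bm{x}_0)_j\rvert$ for the phase of the $j$-th reference entry, one has $(B\bm{z})_j=\overline{(\bm{\omega}_0)_j}\,(A\bm{z})_j=\frac{\overline{(A\bm{x}_0)_j}}{\lvert(A\bm{x}_0)_j\rvert}(A\bm{z})_j$. Expanding the real part of this product in terms of real and imaginary parts yields
\begin{equation*}
  \re\left((B\bm{z})_j\right)=\frac{\re\left((A\bm{z})_j\right)\re\left((A\bm{x}_0)_j\right)+\im\left((A\bm{z})_j\right)\im\left((A\bm{x}_0)_j\right)}{\lvert(A\bm{x}_0)_j\rvert}.
\end{equation*}
Because $\lvert A\bm{x}_0\rvert\neq\bm{0}$, the denominator is strictly positive, so $\re((B\bm{z})_j)=0$ is equivalent to the bilinear numerator vanishing; writing $(A\bm{z})_j=\bm{a}_j^*\bm{z}$ and $(A\bm{x}_0)_j=\bm{a}_j^*\bm{x}_0$ and imposing this for all $j$ gives the first stated characterization.

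For the equivalent polar form, I would factor $(A\bm{z})_j=\lvert(A\bm{z})_j\rvert(\bm{\omega})_j$, so that $(B\bm{z})_j=\lvert(A\bm{z})_j\rvert\,\overline{(\bm{\omega}_0)_j}(\bm{\omega})_j$. At any index where $(A\bm{z})_j\neq 0$ this entry is purely imaginary precisely when the unit-modulus number $\overline{(\bm{\omega}_0)_j}(\bm{\omega})_j$ equals $\pm i$, i.e. when $(\bm{\omega})_j=\pm i(\bm{\omega}_0)_j$ with a sign that may vary from coordinate to coordinate. Collecting these coordinatewise relations yields $\bm{\omega}=\pm i\bm{\omega}_0$ in the element-wise sense.

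The computations are routine, so the delicate point is the bookkeeping at degenerate coordinates rather than any substantive estimate. Where $(A\bm{z})_j=0$ the phase $(\bm{\omega})_j$ is set to $0$ by the convention adopted after \eqref{eq:7}, and there $\re((B\bm{z})_j)=0$ holds trivially; since $\bm{\omega}_0$ has no vanishing entries (again because $\lvert A\bm{x}_0\rvert\neq\bm{0}$), the real-part condition is the cleaner of the two and the polar identity $\bm{\omega}=\pm i\bm{\omega}_0$ should be read as holding up to these degenerate indices. I expect that coordinate accounting to be the only thing requiring care.
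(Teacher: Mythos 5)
Your proposal is correct and follows essentially the same route as the paper: both reduce $\norm{\im(B\bm{z})}=1$ via the isometry of $B$ to the condition $\re(B\bm{z})=\bm{0}$ and then read off the coordinatewise phase relation $\bm{\omega}=\pm i\bm{\omega}_0$. Your version simply makes explicit the componentwise expansion giving the bilinear characterization and the handling of indices where $(A\bm{z})_j=0$, details the paper's terse proof leaves implicit.
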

\begin{proof}
  By the isometry of $B$, we have
\begin{equation*}
  \norm{\im(B\bm{z})}^2\leq
                         \norm{B\bm{z}}^2=\norm{\bm{z}}^2.
\end{equation*}
And the
inequality becomes an equality if and only if 
\begin{equation*}
  \re(B\bm{z})=\re\left(\frac{\overline{A\bm{x}_0}}{\lvert
      A\bm{x}_0\rvert}\circ A\bm{z}\right)=\bm{0}.
\end{equation*}
So the arguments of $\bm{\omega}_0$ and $\bm{\omega}$ differ by
$\frac{\pi}{2}$. It completes the proof.
\end{proof}

Sine we have $\measuredangle \bm{\omega}_0=\measuredangle
\pm i\bm{\omega}$, we have $\tan(\measuredangle \bm{\omega}_0)=\tan(\measuredangle
\pm i\bm{\omega})$, so we have that
$\bm{z}=ic\bm{x}_0$ for a real constant $c$ by the uniqueness of
magnitude retrieval under the assumption of $z$-transform of
$\bm{x}\exp(\frac{ik}{2l}\bm{n}\cdot\bm{n})$ having nontrivial
conjugate symmetric factors. So
\begin{equation*}
  \norm{\im(B\bm{z})}=1, \norm{\bm{z}}=1\text{ iff } \bm{z}=\pm i\bm{u}_0/\norm{\bm{u}_0}.
\end{equation*}
and hence
\begin{equation*}
  \sigma_2=\max_{\bm{z}\in\mathbb{C}^n,\bm{z}\perp i\bm{u}_0}\norm{\bm{z}}^{-1}\norm{\im(B\bm{z})}<1.
\end{equation*}

\begin{proof}[Proof of Theorem~\ref{thm:1}]
Let the solution to the phase retrieval be $\bm{x}_0$, then we have
$\bm{y}_0=A\bm{x}_0$. And let $\bm{v}_k=\Omega_0^*(c_k\bm{y}_k-\bm{y}_0)$, where $c_k$ is the minima phase such that $\norm{c_k\bm{y}_k-\bm{y}_0}$ is the minimum, then we have
\begin{align*}
 \Omega_0^* (c_k\bm{y}_{k+1}-\bm{y}_0)&=\beta\Omega_0^*(K(\bm{y}_k)-K(\bm{y}_0))\\
&=\beta \left(J(\bm{v}_k)+o(\norm{\bm{v}_k})\right).
\end{align*}
Moreover, multiplying $B^*=A^*\Omega_0$, it follows that
\begin{align*}
  c_k\bm{x}_{k+1}-\bm{x}_0=B^*\Omega_0^* (c_k\bm{y}_{k+1}-\bm{y}_0)&=\beta\left(B^*J(\bm{v}_k)+o(\norm{c_k\bm{x}_k-\bm{x}_0})\right)\\
&=\beta\left(B^*(I-BB^*)\bm{v}_k+i(2 B^*BB^*-\gamma B^*)\im(\bm{v}_k)\right)\\
&=iB^*\im(\bm{v}_k)
\end{align*}
by the isometric property $B^*B=I$ and $\gamma=(2\beta-1)/\beta$.

By the optimal phase $c_k$, we have
\begin{equation*}
  \re\langle\bm{v}_k, i\lvert \bm{y}_0\rvert\rangle=\re\langle c_k\bm{y}_k-\bm{y}_0,i\bm{y}_0\rangle=0, 
\end{equation*}
so $\im(\bm{v}_k)$ is orthogonal to the leading right singular vector
$\lvert\bm{y}_0 \rvert$ of $\mathcal{B}$.

So 
\begin{align*} \norm{c_k\bm{x}_{k+1}-\bm{x}_0}&=\norm{B^*\im(\bm{v}_k)}+o(\norm{c_k\bm{x}_k-\bm{x}_0})\\
&=\norm{\mathcal{B}^T\im(\bm{v}_k)}+o(\norm{c_k\bm{x}_k-\bm{x}_0})\\
&\leq \sigma_2\norm{\im(\bm{v}_k)}+o(\norm{c_k\bm{x}_k-\bm{x}_0})\\
&\leq \eta\norm{\bm{v}_k},
\end{align*}
where we use the fact that $\sigma_2<1$, so we have $\eta<1$.
\end{proof}
\begin{remark}
 It is can be seen that for ER algorithm, i.e., $\beta=0.5$, the local
 convergence holds. For HIO algorithm, i.e., $\beta=1$,
 the local convergence has been proved in~\cite{Chen2015}.
\end{remark}

From the proof, we found that one oversampled Fourier diffraction
pattern can ensure the local convergence, and the local convergence 
holds for ER and HIO algorithm, which are two special cases of RAAR
algorithm corresponding two specified $\beta$s. Note that the spectral
gap requires (i.e. $\sigma_2<1$) the setting is in two-dimensional.
\section{Numerical simulations}
\label{sec:numer-simul}
 We explore the performance of the RAAR algorithm in two cases,
 which differ by the number of the diffraction patterns recorded. For real image, 
one structured illumination ($d=3$) diffraction pattern is recorded.For
complex image, two structured illumination diffraction
($d_1=3,d_2=-3$) patterns are recorded. We
 test three real images and one complex image which have different sizes: (a)
cameraman with size $128\times 128$,  (b) lena with size $256\times 256$,
and (c) mandril with size $512\times 512$, (d) gold balls with size $512\times
512$. The only complex image is obtained by adding a random phase to
the original magnitude image. Figure \ref{fig:1} displays the three real images and the
complex golden balls image. For complex image, we plot the magnitude
instead of its real and image parts respectively.
\begin{figure}
  \begin{subfigure}[t]{.24\textwidth}
    \centering
    \includegraphics[width=\textwidth]{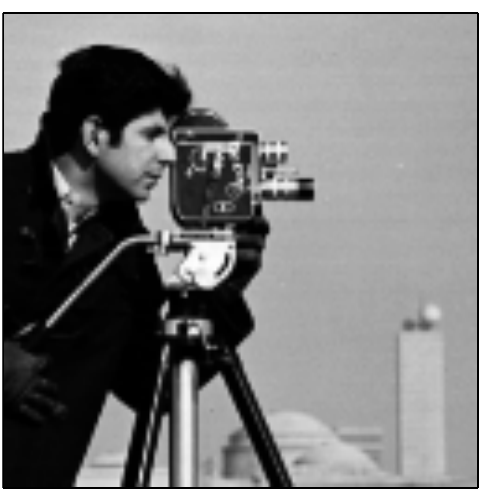}
\caption{cameraman\label{fig:1a}}
  \end{subfigure}
 \begin{subfigure}[t]{.24\textwidth}
    \centering
    \includegraphics[width=\textwidth]{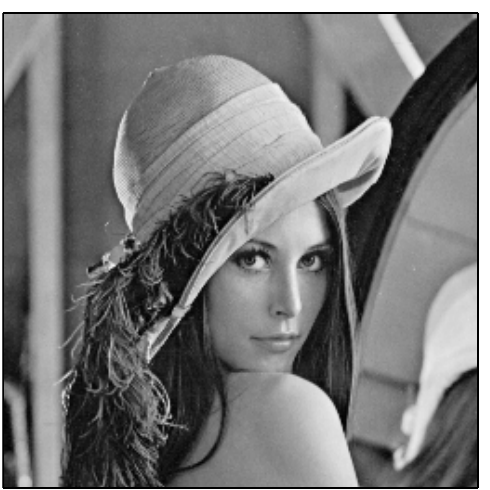}
\caption{lena\label{fig:1b}}
  \end{subfigure}
\begin{subfigure}[t]{.24\textwidth}
    \centering
    \includegraphics[width=\textwidth]{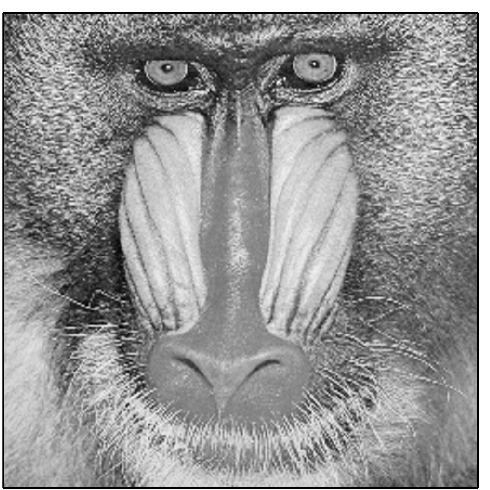}
\caption{mandril\label{fig:1c}}
  \end{subfigure}
\begin{subfigure}[t]{.24\textwidth}
    \centering
    \includegraphics[width=\textwidth]{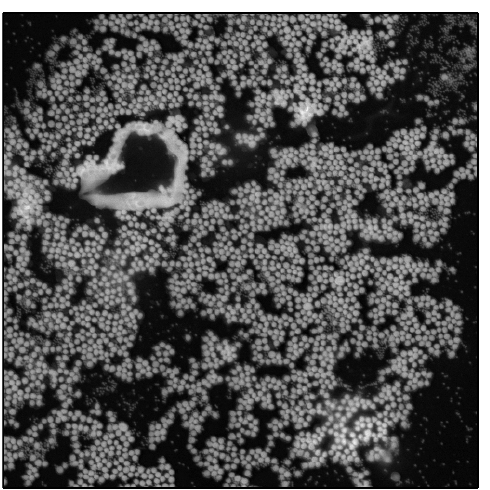}
\caption{gold balls\label{fig:1d}}
  \end{subfigure}
  \caption{The original test images}
  \label{fig:1}
\end{figure}

To compare the rate of convergence, we denote the relative error of
the iteration $\bm{x}_k$ as
\begin{equation*}
  \text{rel err} = \frac{\norm{c_k\bm{x}_k-\bm{x}_0}}{\norm{\bm{x}_0}},\quad c_k=\argmin_{c\in\mathbb{C}}\norm{c\bm{x}_k-\bm{x}_0}.
\end{equation*}
The maximum iteration is set $150$ for real images and $300$ for
complex images. And the initialization is the constant initialization,
where each pixel value is set to unity.
\subsection{Effect of the relaxed parameter $\beta$}
\label{sec:test-2:-parameters}
As we have said, the performance of the general RAAR algorithm depends
on the relaxed parameter $\beta$. If $\beta=0.5$, then it becomes the
ER algorithm, and $\beta=1$, it becomes the HIO algorithm or
Douglas-Rachford algorithm. We test three different $\beta$s: 0.8, 0.9
and 1. The performance of the RAAR algorithm for the four test images
are decipted in Figures~\ref{fig:2} for noiseless case. 
The performance of RAAR algorithm differ for real images and complex
images. For the three real images, the oscillation phenomenon with the HIO ($\beta=1$) can be observed
($\beta=0.9$ shows little oscillation for mandril) when
the iteration is in the neighborhood of the solution. Despite the
unwanted oscillation, the performance
of RAAR algorithm with $\beta=1$ is superior to that with $\beta=0.8$
and $0.9$. The performance comparison of $\beta=0.8$ and
$\beta=0.9$ does not show a uniform result, and it depends on the
problem we consider. For the small size image (a), the
case $\beta=0.8$ is superior to $0.9$. For image (b), $\beta=0.9$
is better than $0.8$. For (c), $0.8$ is better. For complex image (d),
the performance behavior is different. The RAAR algorithm with
$\beta=0.9$ is superior to HIO algorithm (i.e. $\beta=1$). 

We also test the RAAR algorithm when the diffraction data contain Poisson
noise. The performance of the RAAR algorithm with different $\beta$
is showed in Figure~\ref{fig:3} in the case of SNR level being $40$dB. It is
observed that in the presence of noise, the common instability of the
HIO algorithm is obvious. If it runs more long, the iteration keep
away the solution more far. This wandering of the iterations near an
local solution has been reported in~\cite{Luke2004}. The relaxations
in the RAAR algorithm can dampen the iterations near a solution. And the smaller the $\beta$ is, the
smaller the relative error for real images. When $\beta=0.5$, RAAR
algorithm becomes the ER algorithm, and ER algorithm can be considered
as a fixed stepsize gradient descent method for solving the
following optimization problem
\begin{equation*}
  \min_{\bm{x}\in\mathbb{C}^n} \quad f(\bm{x})=\norm{\lvert A\bm{x}\rvert-\bm{b}}^2.
\end{equation*} A gradient descent scheme with a fixed unity stepsize is
\begin{align*}
  \bm{x}_{k+1}&=\bm{x}_k-\nabla f(\bm{x}_k)\\
  &=\bm{x}_k-\left(\bm{x}_k-\left[A^*\left(\frac{A\bm{x}_k}{\lvert
    A\bm{x}_k\rvert}\circ \bm{b}\right)\right]_{\mathcal{X}}\right)\\
&=\left[A^*\left(\frac{A\bm{x}_k}{\lvert
    A\bm{x}_k\rvert}\circ \bm{b}\right)\right]_{\mathcal{X}}.
\end{align*}
It is the iteration of ER algorithm. Note that optimization approach is more stable than iterative
projection approaches. So small $\beta$ should be better, on the
contrary, the larger the $\beta$ is, the more
possible to escape the minima as HIO method. We  conclude that
$\beta=0.8$ is applicable to real images and $\beta=0.9$ for
complex images.
\begin{figure}
   \begin{subfigure}[b]{.5\textwidth}
\centering
\caption{}
\includegraphics{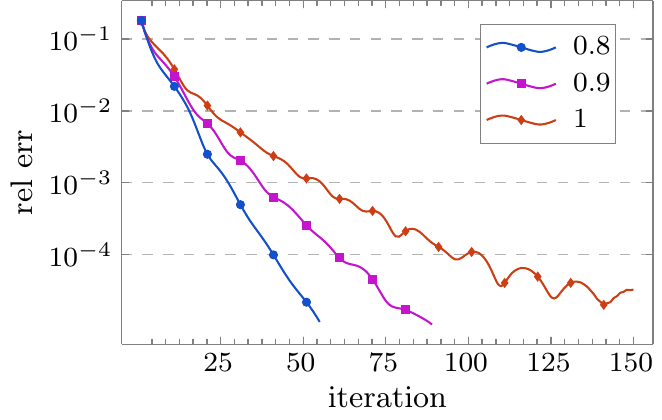}
   \end{subfigure}\hskip 30pt
   \begin{subfigure}[b]{.5\textwidth}
\centering
\caption{}
\includegraphics{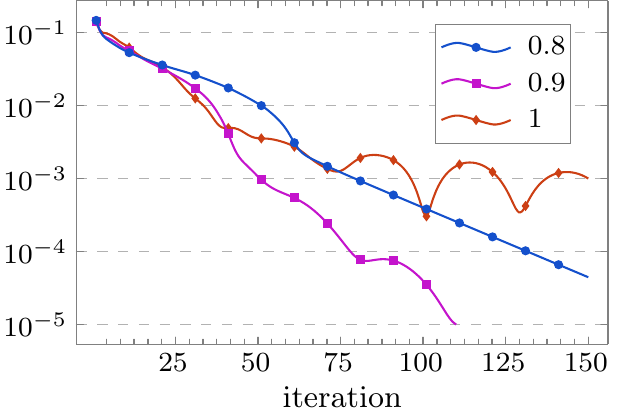}
   \end{subfigure}\\\medskip
   \begin{subfigure}[b]{.5\textwidth}
\centering
\caption{}
\includegraphics{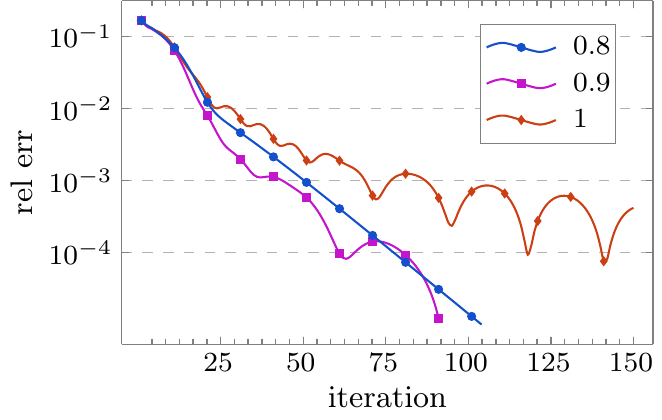}
   \end{subfigure} \hskip 30pt
     \begin{subfigure}[b]{.5\textwidth}
\centering
\caption{}
\includegraphics{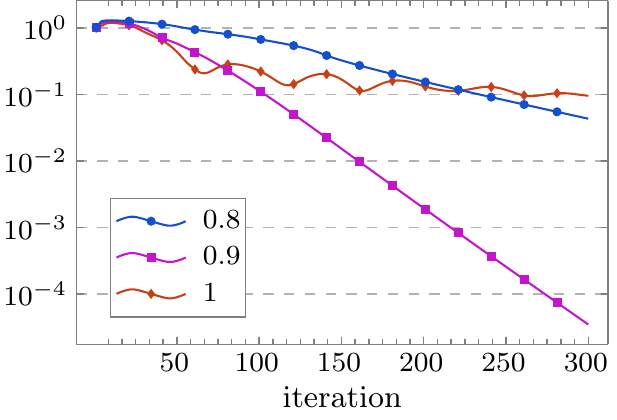}
   \end{subfigure}
  \caption{Relative error vs parameter $\beta$, no noise}
 \label{fig:2}
 \end{figure}

\begin{figure}
   \begin{subfigure}[b]{.5\textwidth}
\centering
\caption{}
\includegraphics{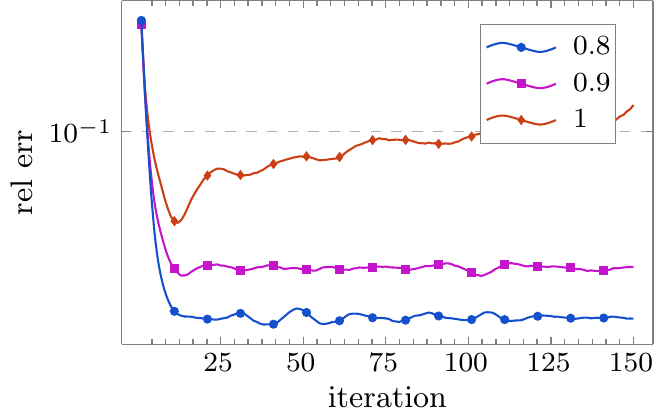}
   \end{subfigure}\hskip 30pt
   \begin{subfigure}[b]{.5\textwidth}
\centering
\caption{}
\includegraphics{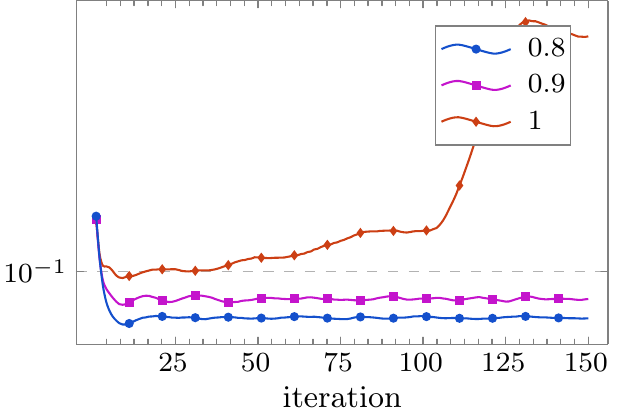}
   \end{subfigure}\\\medskip
   \begin{subfigure}[b]{.5\textwidth}
\centering
\caption{}
\includegraphics{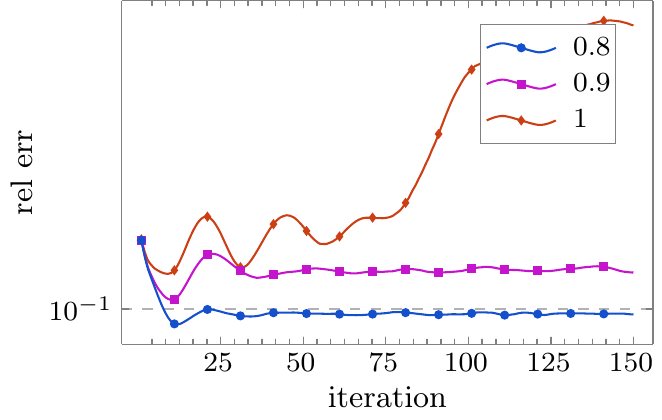}
   \end{subfigure} \hskip 30pt
     \begin{subfigure}[b]{.5\textwidth}
\centering
\caption{}
\includegraphics{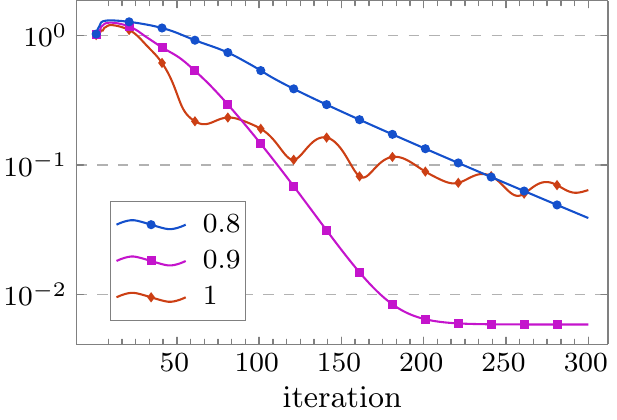}
   \end{subfigure}
 \caption{Relative error vs parameter $\beta$, SNR=$40$dB}
 \label{fig:3}
 \end{figure}

\subsection{Noisy measurements}
\label{sec:noisy-measurments}

In the second set of experiments we consider the same test images but
with noisy measurements. Since
the main noise yields Poisson distribution resulting from the photon
counting in practice, we add
random Poisson noise to the measurements for five different SNR levels,
ranging from 30dB to 50dB with step 5dB. Figure~\ref{fig:4} shows the average
relative error in dB versus the SNR\@. The error curve shows clearly
the linear behavior between SNR and relative error for complex
images (see Figure~\ref{fig:4b}). For real images, the curves imply
the instability of the phase retrieval from structured
illumination: the SNR becomes lower, the reconstruction becomes more
worse. To be clear, we show the
reconstructions from two SNR level data. Figures~\ref{fig:5} and~\ref{fig:6}
depict
the resulting reconstructions for SNR=$30$dB and SNR=$40$dB, respectively. 
\begin{figure}
\begin{subfigure}[b]{.5\textwidth}
  \centering
\includegraphics{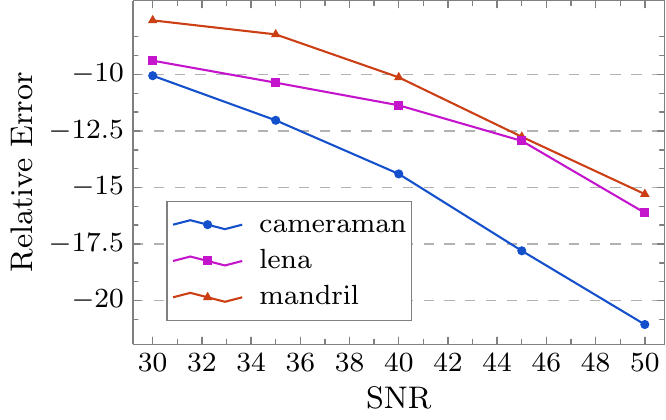}
\caption{Three real images\label{fig:4a}}
\end{subfigure}\hskip 20pt
\begin{subfigure}[b]{.5\textwidth}
  \centering
\includegraphics{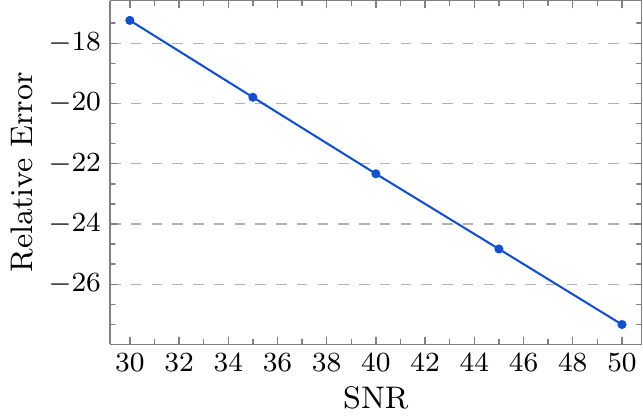}
\caption{Complex image\label{fig:4b}}
\end{subfigure}
\caption{Relative Error in dB vs SNR\label{fig:4}}
\end{figure}

\begin{figure}
  \begin{subfigure}[t]{.24\textwidth}
    \centering
    \includegraphics[width=\textwidth]{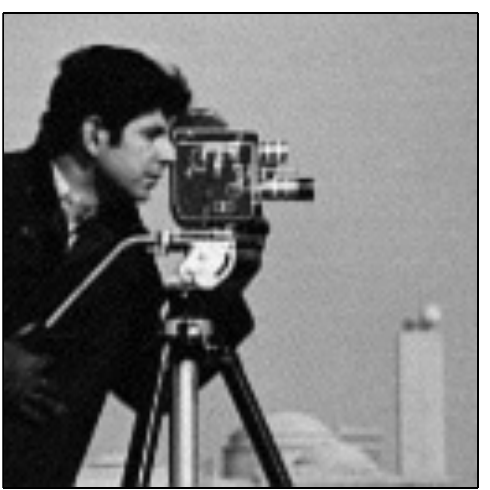}
\caption{cameraman\label{fig:5a}}
  \end{subfigure}
 \begin{subfigure}[t]{.24\textwidth}
    \centering
    \includegraphics[width=\textwidth]{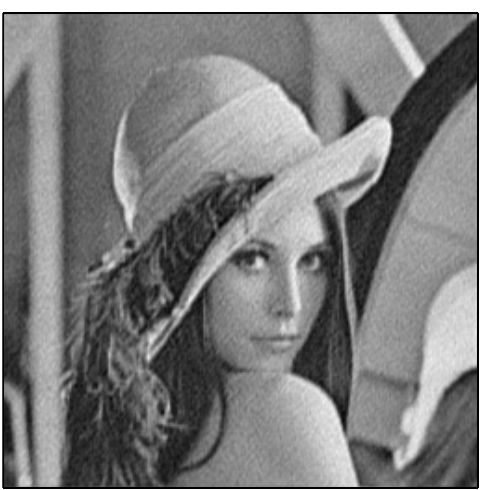}
\caption{lena\label{fig:5b}}
  \end{subfigure}
\begin{subfigure}[t]{.24\textwidth}
    \centering
    \includegraphics[width=\textwidth]{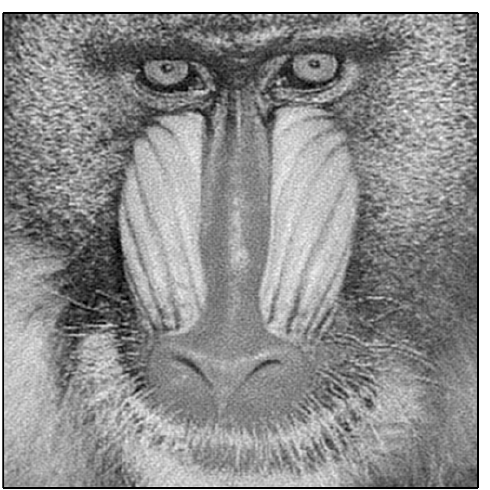}
\caption{mandril\label{fig:5c}}
  \end{subfigure}
\begin{subfigure}[t]{.24\textwidth}
    \centering
    \includegraphics[width=\textwidth]{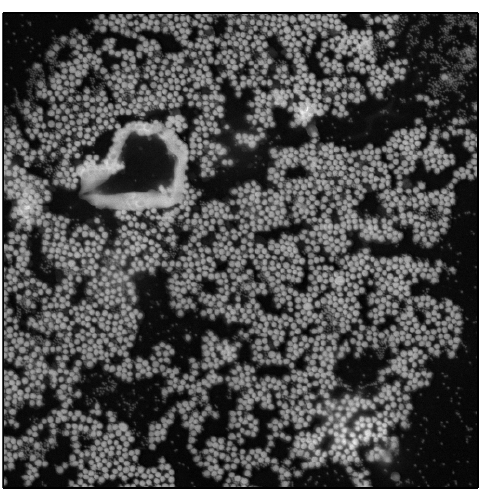}
\caption{gold balls\label{fig:5d}}
  \end{subfigure}
  \caption{Reconstruction, SNR = $40$dB}
  \label{fig:5}
\end{figure}

\begin{figure}
  \begin{subfigure}[t]{.24\textwidth}
    \centering
    \includegraphics[width=\textwidth]{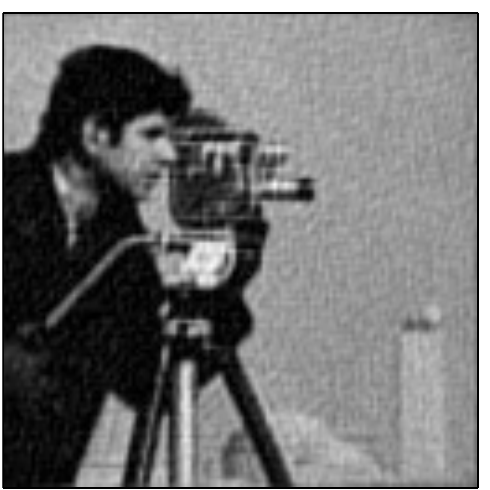}
\caption{cameraman\label{fig:6a}}
  \end{subfigure}
 \begin{subfigure}[t]{.24\textwidth}
    \centering
    \includegraphics[width=\textwidth]{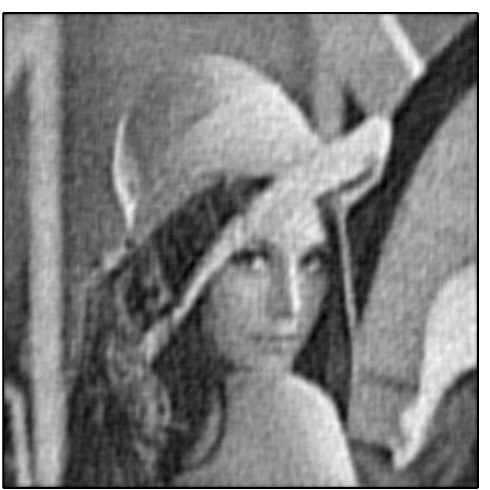}
\caption{lena\label{fig:6b}}
  \end{subfigure}
\begin{subfigure}[t]{.24\textwidth}
    \centering
    \includegraphics[width=\textwidth]{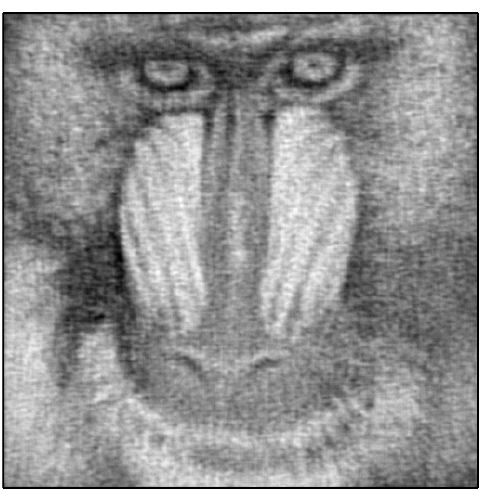}
\caption{mandril\label{fig:6c}}
  \end{subfigure}
\begin{subfigure}[t]{.24\textwidth}
    \centering
    \includegraphics[width=\textwidth]{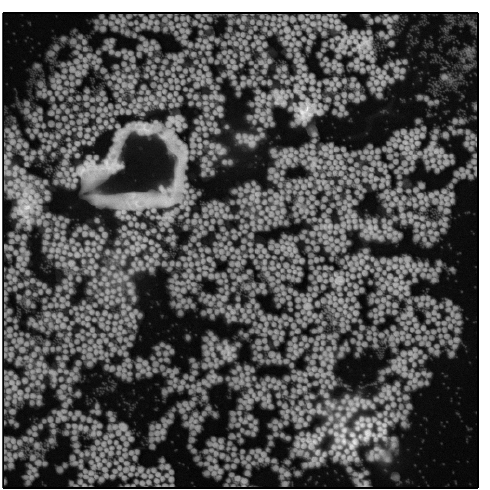}
\caption{gold balls\label{fig:6d}}
  \end{subfigure}
  \caption{Reconstruction, SNR = $30$dB}
  \label{fig:6}
\end{figure}

\subsection{The effect of the phase shift}
\label{sec:effect-phase-shift}

We explore the effect of the phase shift $d$ added to the original
signal on the performance of the phase retrieval. Figure~\ref{fig:7} shows that
$d=4$ is a good choice of the phase shift. The performance is not
sensitive to the $d\geq 4$. And the smaller the $d$ is, the more
iterations are needed for the same relative error level.

\begin{figure}
   \begin{subfigure}[b]{.5\textwidth}
\centering
\caption{}
\includegraphics{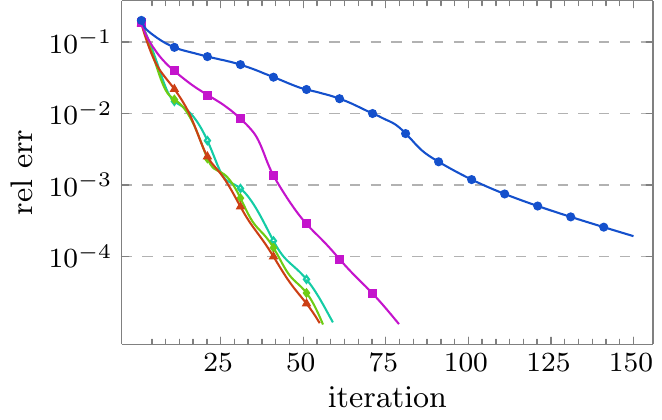}
   \end{subfigure}\hskip 30pt
   \begin{subfigure}[b]{.5\textwidth}
\centering
\caption{}
\includegraphics{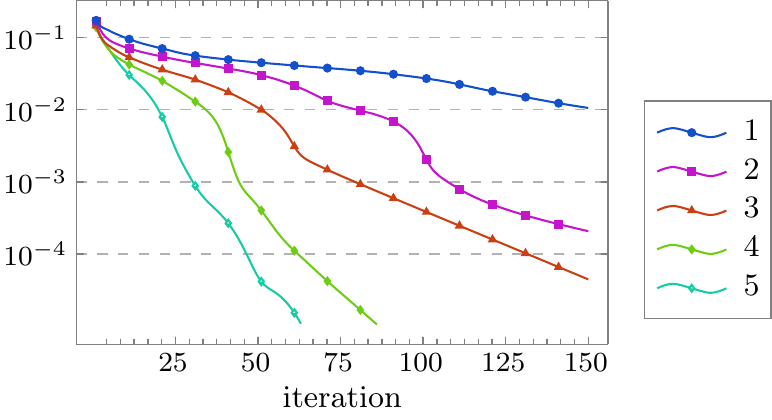}
   \end{subfigure}\\\medskip
   \begin{subfigure}[b]{.5\textwidth}
\centering
\caption{}
\includegraphics{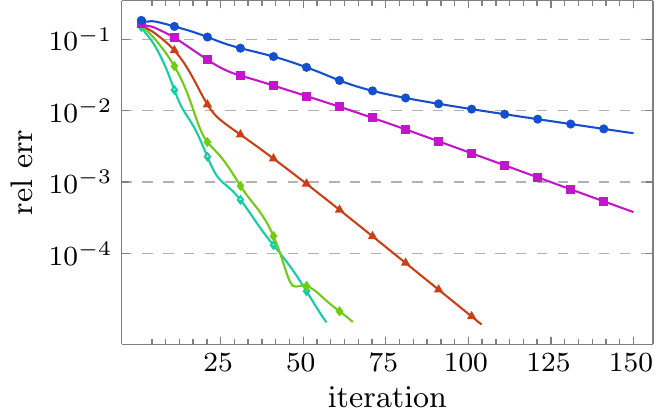}
   \end{subfigure} \hskip 30pt
     \begin{subfigure}[b]{.5\textwidth}
\centering
\caption{}
\includegraphics{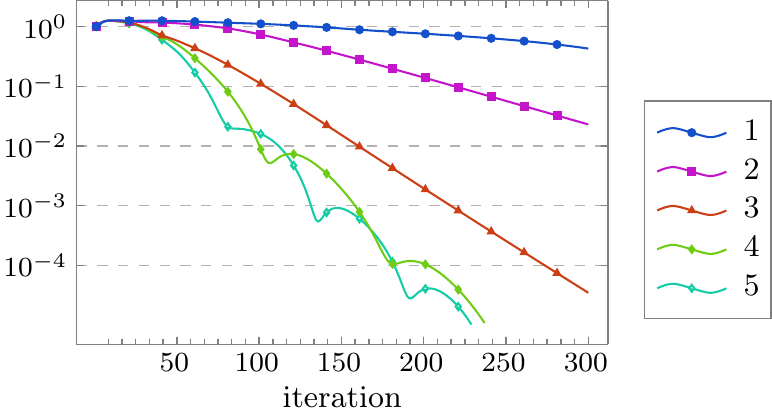}
   \end{subfigure}
  \caption{Relative error vs phase shift $d$ for noiseless case, for
    golden balls the two phase shift are $\pm d$}
 \label{fig:7}
 \end{figure}

\section{Conclusion}
\label{sec:conclusion}
The
knowledge of the Fourier transform intensity from structured illuminations with random
phase masks specifies the signal up
to a single complex constant~\cite{Fannjiang2012a}. However, the random phase mask is
difficult to realize in practice, we propose the structured
illumination with a pixel-dependent deterministic phase shift, which can be
implemented by recording the diffraction pattern in the Fresnel
regime. We extend the RAAR algorithm
for two and more diffraction patterns, which operates in Fourier
domain rather than in space
domain. We follow
the methodology in~\cite{Chen2015} and prove the local 
convergence of the RAAR algorithm. And the ER and HIO are then two special
cases with the relaxation parameter $\beta$ being $0.5$ and $1$. We
found that the iterations of HIO algorithm oscillate 
in the neighborhood of the solution for noiseless case and wander
away from the neighborhood of the solution for noisy data. From the simulations, we recommend
$\beta=0.8$ is applicable for real images and $\beta=0.9$
for complex images. The linear relation between the relative error and
the noise level shows that phase retrieval with two patterns for phase 
retrieval is stable.
\section*{Acknowledgments}
The authors thank Chao Wang for helpful comments and suggestions
on this manuscript draft. The authors are indebted to Stefano
Marchesini for providing us with the gold balls data set used in
numerical simulations. This work was supported by NSF grants of China (61421062, 11471024).

\printbibliography
\end{document}